\theoremstyle{plain}
\newtheorem{definition}{Definition}
\newtheorem{proposition}{Proposition}
\newtheorem{theorem}[proposition]{Theorem}
\newtheorem{corollary}[proposition]{Corollary}
\newtheorem{lemma}[proposition]{Lemma}
\newtheorem{remark}[proposition]{Remark}
\newtheorem*{proposition*}{Proposition}
\newtheorem*{theorem*}{Theorem}
\newtheorem*{corollary*}{Corollary}
\newtheorem*{lemma*}{Lemma}
\newtheorem*{remark*}{Remark}
\newtheorem*{example*}{Example}
\newcommand{\Z}{\mathbb{Z}}
\newcommand{\Q}{\mathbb{Q}}
\newcommand{\R}{\mathbb{R}}
\begin{document}

\title{Open Gromov-Witten theory without Obstruction}

\author{Vito Iacovino}


\email{vito.iacovino@gmail.com}

\date{version: \today}


\begin{abstract}

Open Gromov-Witten invariants are defined as cycles of the  multi-curve chain complex, well defined up to isotopy. 

\end{abstract}

\maketitle

\section{Introduction}

For $X$  Calabi-Yau threefold, closed Gromov-Witten invariants count $J$-(psudo-)holomorphic curves without boundary in $X$, and are independent of the choice of almost complex structure $J$. In a similar way, if $L$ is an oriented Lagrangian submanifold of $M$ of Maslov index zero, we would like to define open Gromov-Witten invariants which count $J$-(pseudo-)holomorphic curves on $M$ with boundary on $L$. This is a much more difficult problem than the closed case since the moduli spaces have no boundary in the closed case, but have boundary and corners in the open case. The presence of the boundary makes the usual virtual counting meaningless since it depends on the choice of perturbation of the moduli space.


Motivated by the result of \cite{W}, in \cite{OGW1} we introduced the moduli space of Multi-(pseudo-holomorphic-)Curves that allow to treat the problem of boundaries at homological level, making the problem someway more similar to the closed case.  The moduli space of multi-curves are associated to certain   decorated graphs and are built from the moduli space of usual pseudo-holomorphic curves. The key point is that the perturbation of the moduli space of multi-curves is constrained in a suitable way. As an application,  in \cite{OGW1} in the case that $L$ has the rational homology of a sphere,  Open Gromov-Witten invariants to all genus are defined in terms of linking numbers of the boundary components of the multi-curve, 

In this paper we introduce the Multi-Curve chain-complex defined in terms of elementary relations which are directly related to the relations defining the perturbation of the moduli space of multi-curves. We show that Open Gromov-Witten invariants are naturally defined as $MC$-Cycles. To construct a $MC$-cycle from the moduli space of (multi-)curves it is necessary to  make some choices (such as the complex-structure and the perturbation of the moduli of multi-curves). Different choices leads to isotopic cycles, defined up to isotopy of isotopies.  




As application of our main result we define rational numbers invariants for each defined Euler Characteristic. The existence of these invariants does not rely on the existence on a bounding chain,  there is not obstruction in the definition of these invariants.
This fact is quite remarkable since the vanishing of the  obstruction classes arising in the definition of a bounding chain (familiar from Lagrangian Floer Homology \cite{FO3}) is a very strong constrain and it can be rephrased as the vanishing of the disk invariants for every  $\beta \in H_2(M,L)$ with $\partial \beta \neq 0$ in $H_1(L,\Q)$ (see \cite{obs}).

\subsection{Statement of the results}

In this paper we show that open Gromov-Witten theory leads to a cycle on the multi-curve chain complex. The Multi-Curve Cycle is well defined up to isotopy.  From a $MC$-cycle we can construct a simpler object which we call nice $MC$-cycle.  In this introduction we provide the definition of nice $MC$-cycles and their isotopies.


\subsubsection{Nice Multi-Curve Cycles}


Fix an oriented three manifold $M$ and a class $\gamma \in H_1(M, \Z)$. 
Consider the  objects  
\begin{equation} \label{generator-nice}
( H , ( w_h  )_{h \in H} ) 
\end{equation}
where $H$ is a finite set , $\{ w_h  \}_{h \in H} $ are closed one  dimensional  chains on $M$ on the homology class $\gamma$  which are close on the $C^0$-topology. 
Denote by $\mathfrak{Gen}(\gamma)^{\dagger}$ the set of the objects (\ref{generator-nice}) modulo the obvious equivalence relation:  $( H , ( w_h  )_{h \in H} ) \cong ( H' , ( w_h'  )_{h \in H'} )$ if there exists an identification of sets $H =H'$ such that $w_h=w_h'$ for each $h \in H$. 

Define $\mathfrak{Gen}(\gamma) \subset \mathfrak{Gen}(\gamma)^{\dagger}$ as the subset obtained imposing on the elements (\ref{generator-nice}) the extra condition  
\begin{equation} \label{generator-transversality}
w_h \cap w_{h'} = \emptyset \text{    if    } h \neq h'.
\end{equation}
The vector space $\mathcal{Z}_{\gamma}$ of $\mathbf{nice \thickspace MC-cycles}$ in the homology class $\gamma$  is the formal vector space generated by $\mathfrak{Gen}(\gamma)$.

Isotopies of nice $MC$-cycles are defined as follows.
Consider one parameter family of objects (\ref{generator-nice}) 
\begin{equation} \label{generator-nice-iso}
 ( H ,(\tilde{w}_h  )_{h \in H} , [a,b] ) 
 \end{equation}
where, for each $h$, $\tilde{w}_h = (  w_h^t )_{t \in [a,b]}$ is piecewise smooth one parameter family of closed one  dimensional  chains on $M$ on the homology class $\gamma$, parametrized by the interval $[a,b]$. We refer the coordinate parameterizing the interval  $[a,b]$ as time. 
Denote by $\tilde{\mathfrak{Gen}}(\gamma, [a,b])^{\dagger}$ the set of objects (\ref{generator-nice-iso}) modulo the obvious equivalence relation. 

Let $\tilde{\mathfrak{Gen}}(\gamma, [a,b]) \subset \tilde{\mathfrak{Gen}}(\gamma, [a,b])^{\dagger}$ be the subset obtaining imposing on the elements (\ref{generator-nice-iso}) the extra condition 
\[
\tilde{w}_h \cap\kern-0.7em|\kern0.7em \tilde{w}_{h'} \text{   if   }h \neq h',
\]
when considered as chains on $M \times [a,b]$.
Set $ \tilde{\mathfrak{Gen}}(\gamma) = \sqcup_{a < b} \tilde{\mathfrak{Gen}}(\gamma, [a,b])$.

Note that an element of  $\tilde{\mathfrak{Gen}}(\gamma, [a,b])$ and an element of $\tilde{\mathfrak{Gen}}(\gamma, [b,c])$, if they agree on $b$, can be glued to obtain an element of $\tilde{\mathfrak{Gen}}(\gamma, [a,c])$ . 

An $\mathbf{isotopy \thickspace of \thickspace nice \thickspace MC-cycles }$  $\tilde{Z}$ (in the homology class $\gamma$) is defined by  a formal linear combination of obejcts $\tilde{\mathfrak{Gen}}(\gamma)$:
$$\tilde{Z}= \sum_{i \in I} r_i ( H_i , ( \tilde{w}_{h,i}  )_{h \in H_i} , [a_i,b_i] ) $$
for some $r_i \in \Q$.
We consider the linear combinations modulo gluing. We assume the following constrain.

Observe that  $\tilde{Z}$ defines a one parameter family  of elements of $Z_t \in \mathcal{Z}_{\gamma}$, $t \in [0,1]$, which can be discontinues on a finite number of times (we do not define $Z_t$ if $t$ is a discontinuity point). 
If there exist $h,h' \in H_i$ such that $ \tilde{w}_{h_1,i},  \tilde{w}_{h_2,i}$ cross transversely at a time $t_0$, we require that $\tilde{Z}$ jumps according to the formula
\begin{equation} \label{jump}
 Z^{t_0^+} - Z^{t_0^-} =  \pm r_i   (H_i , ( w_{h,i}^{t_0}  )_{h \in H_i \setminus \{h_1,h_2 \} }  )  .
\end{equation}
where the sign is defined by the sign of the crossing.
Denote by  $\tilde{\mathcal{Z}}_{\gamma}$ the set of isotopies of nice $MC$-cycles.

If we consider $\mathfrak{Gen}(\gamma) $  as elements of the  symmetric tensor product of one chains $\text{Sym} (C_1(M))$ using 
$( H , \{ w_h  \}_{h \in H} )  \mapsto \otimes_h w_h $,  
formula  (\ref{jump}) can be written as 
\begin{equation} \label{equivalence1}
   w_{h_1}^{t_0^-} \otimes w_{h_2}^{t_0^-}   \leadsto    \tilde{w}_{h_1}^{t_0^+} \otimes \tilde{w}_{h_2}^{t_0^+}  \pm 1  .
\end{equation}







\subsubsection{Cycles with not zero Chern Class}

We need to extended the slightly the definition of $MC$-cycle provided above. It will depend on a homology class $\mathfrak{c} \in H_1(M, \Z)$ which we call the Chern-Class of the $MC$-cycle. The $MC$-cycle defined above have  $\mathfrak{c} =0$. 

Fix  $\mathfrak{c} \in H_1(M, \Z)$ . Let $\mathfrak{Gen}(\gamma| \mathfrak{c})^{\dagger}$ be the set of objects 
\begin{equation} \label{generator-nice-ann}
 ( H , ( w_h^{\flat})_{h \in H} , (w^{ann}_h )_{h \in H}  ) 
 \end{equation}
 modulo the obvious equivalence relation, 
where 
\begin{itemize}
\item $H$ is a finite set ,
\item  for each $h \in H$, $ w_h  \in C_1(M,  \Z )$ is a closed one  dimensional  integer chain   on $M$ with $[w_h] = \gamma$,  
\item  $ w_h^{ann} \in C_1(M,  \frac{1}{2} \Z )  $ is a closed one  dimensional  half-integer chain  on $M$ with $2 [w_h^{ann}] = \mathfrak{c}$, 
\item  the one chains $\{ w_h  \}_{h \in H} $ are close on the $C^0$-topology, 
 \item the one chains $\{ w_h^{ann}  \}_{h \in H} $ are close on the $C^0$-topology. 
\end{itemize} 
As above, we consider (\ref{generator-nice-ann}) modulo the obvious equivalence relation.

Set $w_h = w_h^{\flat} + w^{ann}_h$.
 Define $\mathfrak{Gen}(\gamma| \mathfrak{c}) \subset \mathfrak{Gen}(\gamma| \mathfrak{c})^{\dagger}$  imposing  again condition (\ref{generator-transversality}) on the elements (\ref{generator-nice-ann}) . Define $\mathcal{Z}_{\gamma| \mathfrak{c}}$ as the formal $\Q$-vector space generated by $\mathfrak{Gen}(\gamma|\mathfrak{c})$.
We refer to $\mathfrak{c}$ as the Chern class of the $MC$-cycle.

The vector space of isotopies $\tilde{\mathcal{Z}}_{\gamma | \mathfrak{c} }$ can be defined making the same modifications to the definition of $\tilde{\mathcal{Z}}_{\gamma }$ .

\begin{remark}
The class $\mathfrak{c}$ is associated to the perturbed moduli space of area zero annulus with one boundary marked point. More precisely we need to consider the moduli space of multi-curves associated to the following decorated graphs with area zero:
\begin{enumerate}
\item annulus with one boundary marked point;
\item  disk with one internal and one boundary marked point;
\item  disk with three boundary marked points and one edge connecting two marked points.
\end{enumerate}
 If we consider the last graph with fixed cyclic order of its boundary marked points we obtain an Euler structure on $L$ .  $\mathfrak{c}$ is the Chern Class of this Euler Structure and it is obtained considering the last graph without cyclic order.  $w^{ann}$ is an half-integer chain since the moduli associate this graph acquire a $\Z_2$ automorphism group when we forget the cyclic order of its boundary marked points . 
\end{remark}





\subsubsection{Main Theorem}

Let  $X$ be a Calabi-Yau simplectic six-manifold and a let $L$ be a Maslov index zero lagrangian submanifold of $X$. As in \cite{OGW1}, to deal with the nodes of type $E$ in sense of \cite{L}, we assume $[L] =0 \in H_3(X, \Z)$. 
Fix  a four chain $K$ with $\partial K =L$.

To the four chain $K$, using (\ref{chern-four-chain}), we can associate a one dimensional homology class $\mathfrak{c}(K) \in H_1(L,\Z)$, which we call Chern class. 





\begin{theorem} \label{main-theorem}
Let $\beta \in H_2(X,L, \Z)$ and $\chi \in \Z$ . To the moduli space of pseudoholomotphic multi-curves of homology class $\beta$ and Euler characteristic $\chi $ it is associated a multi-curve cycle $Z_{\beta, \chi} \in \mathcal{Z}_{\partial \beta| \mathfrak{c}}$ with Chern class $\mathfrak{c}$. $Z_{\beta, \chi}$ depends by the varies choices we made to define the  Kuranishi structure and its perturbation on the moduli space of multi-curves.
Different choices lead to isotopic $MC$-cycles.
\end{theorem}
The dependence on the various choices appearing in Theorem \ref{main-theorem} is analogous to one appearing in the context  of $A_{\infty}$-structures  associated to pseudo-holomorphic disks (\cite{FO3}) .


For technical reasons it is problematic to construct   a nice-$MC-cycle$ in the sense above directly from the moduli space of multi-curves (this problem is related to the issues of remark \ref{odd} below). 
In section $2$ we introduce a  more general notion of Multi-Curve-Cycle. 
In section $3$ we introduce the moduli space of pseudoholomorphic Multi-Curves. The perturbations of the moduli of multi-curves are constrains in a way  directly related to the relations defining $MC$-cycles
leading to a relatively easy construction of the $MC$-cycle associated to the pair $(X,L)$. 
 Finally, an elementary topological argument shows that from   $MC$-cycle we can construct a nice $MC$-cycle in the sense above. The same considerations work for isotopies .

\begin{remark} \label{odd}
For any boundary marked point, there are two interesting properties which we can require to the  Kuranishi structure of moduli space of  pseudo-holomorphic curve:   weakly sub-mersivity and forgetful compatibility. 
These two conditions are some-way in odds, in the sense that it is not possible to impose both simultaneously.  

In the constructions of an $A_{\infty} $ structure of Fukaya-Oh-Ohta-Ono  it is necessary to work \emph{component-wise} on of the moduli  space of disks.  
This sets strong constrains on the perturbation of the moduli space and it does not  allow to distinguish among marked and singular points.  
This issue is also related to the difficulty to keep symmetries of the problem (such as cyclic symmetry) in the standard approach to the construction of $A_{\infty} $ structures .

In contrast, in our  construction of the $MC$-cycle using the moduli space of multi-curve we do not need to work component-wise, allowing to deal with these issues in a more simple and direct way. This simplify drastically the construction of a $MC$-cycle compared to the construction of an $A_{\infty} $ structure of Fukaya-Oh-Ohta-Ono.
  
\end{remark}

\subsubsection{Rational invariants}

In order to define rational numbers we use the elementary fact that if $\gamma=0 \in H_1(L,\Q)$ there exists an homomorphism 
\begin{equation} \label{MultiLink}
MultiLink: MCH_0(\gamma) \rightarrow H_0(point, \Q)
\end{equation}
defined from the usual linking number of curves on $L$. As corollary of Theorem \ref{main-theorem} we obtain
\begin{corollary} \label{rational}
To each $\beta \in H_2(X,L, \Z)$ with $2\partial \beta + \mathfrak{c}=0 \in H_1(L,\Q)$  it is associated a rational number $a_{\beta, \chi} \in \Q$ which depends only on $(X, L )$ and $K$, and is independent of compatible almost complex structure $J$ and various choices we made to define a Kuranishi structure. 
\end{corollary}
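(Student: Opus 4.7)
The plan is to obtain $a_{\beta,\chi}$ as the image of the class produced by Theorem \ref{main-theorem} under the multi-linking homomorphism (\ref{MultiLink}). First I would apply Theorem \ref{main-theorem} to extract $Z_{\beta,\chi} \in NMCH_0(\partial\beta + \gamma_0)$, and then transport it along the isomorphism $MCH_0 \cong NMCH_0$ to a class in $MCH_0(\partial\beta+\gamma_0)$. The hypothesis $\partial\beta + \gamma_0 = 0 \in H_1(L,\Q)$ is precisely the condition that allows (\ref{MultiLink}) to be applied, so I would define
\[
a_{\beta,\chi} := MultiLink(Z_{\beta,\chi}) \in H_0(\text{point},\Q) = \Q.
\]

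For the independence assertions, I would argue that each follows formally from the corresponding statement in Theorem \ref{main-theorem}. Indeed, the isomorphism $MCH_0 \cong NMCH_0$ and the homomorphism $MultiLink$ are intrinsic to the pair $(L,\gamma)$ and do not involve $J$, the Kuranishi data, or the auxiliary geometric choices used to build the moduli space of multi-curves. Hence any ambiguity that is killed at the level of $Z_{\beta,\chi}$ is automatically killed after composing with these canonical maps, and the same mechanism gives invariance under adding a homologically trivial four-chain to $K$.

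The real content is not the corollary itself but the construction and well-definedness of $MultiLink$ in (\ref{MultiLink}); the corollary is then a formal consequence. On a representative $\{\gamma_h\}_{h\in H(k)}$ one takes the product $\prod_{\{h_0,h_1\}\in E(k)}\mathrm{lk}(\gamma_{h_0},\gamma_{h_1})$ of rational linking numbers in $L$ (well defined because each $\gamma_h$ is nullhomologous and $\gamma_{h_0}\cap\gamma_{h_1}=\emptyset$), with sign read off from $\mathfrak{o}_k$. The main step, which I expect to be the principal obstacle, is showing that this descends to the quotient by $\mathcal{H}_{nul}$: under a transverse isotopy $\{\gamma_h(t)\}$, the jump of $\mathrm{lk}(\gamma_{h_0}(t),\gamma_{h_1}(t))$ across a transverse crossing equals the signed intersection number $(\gamma_{h_0}\cap\gamma_{h_1})_t$, and multiplying by the product of linking numbers over the remaining pairs matches exactly the correction term in the defining relation of $\mathcal{H}_{nul}$. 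This is a direct variational identity for linking numbers, but bookkeeping the orientations through the $\mathfrak{o}_k$ and $\mathrm{Aut}(H(k),\sigma)$ quotients is the delicate part.
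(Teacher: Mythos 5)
Your proposal is correct and follows exactly the paper's route: the corollary is obtained by composing the class $Z_{\beta,\chi}$ of Theorem \ref{main-theorem} with the isomorphism $NMCH_0 \cong MCH_0$ and the homomorphism $MultiLink$ of (\ref{MultiLink}), with all invariance statements inherited formally, and the paper's proof of the $MultiLink$ proposition is precisely the variational identity for linking numbers under transverse crossings that you identify as the real content. No discrepancies to report.
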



\subsection{Relation with String Theory}  \label{string}
 At physical level of rigour, it is well known that the partition function of the closed topological string defined  using the path integral can be computed in terms of closed Gromov-Witten invariants. Holomorphic curves arise at physical level from the supersymmetric localization of the mathematical ill defined path integral. 
In analogy with the closed case it should be expected that Open Gromov-Witten invariants are the mathematical counterpart of the Open Topological String partition function.

In \cite{W} Witten defines at the physical level of rigor the open topological string partition function for a pair $(X,L)$, where $X$ is a Calabi-Yau three-fold and $L$ is Maslov index zero lagrangian submanifolds of $X$. 
Witten shows that in the open case the supersymmetric localization is more subtle compared to the  closed case. He argues that the open topological string partition function receives contributions also from degenerate or partially degenerate curves. 
In the case of the cotangent bundle $M=T^*L$ (where non-constant holomorphic curves are absent) the open topological string is equivalent to Chern-Simons theory on $L$. The degenerate curves correspond to Feynman graphs of the Chern-Simons theory. 

On a general Calabi-Yau three-fold  the Chern-Simons theory of $L$ needs to be corrected by Wilson loops associated to the boundary circles of the curves. The contribution of a set of holomorphic curves $(\Sigma_i,u_i$ can be computed  as expansion on the formal parameter $\theta$ using formula $(4.50)$ of \cite{W}:
\begin{equation} \label{witten}
\mathcal{L} = \frac{1}{2 g_s} \int_L Tr (A \wedge dA + \frac{2}{3} A \wedge A \wedge A ) 
+ \sum_i g_s^{-\chi_i}   \eta_i 
\text{exp} (- \theta q_i)
Tr P \exp \int_{w_i} A,
\end{equation}
where $w_i= \partial \Sigma_i$, $q_i$ is the symplectic area of $\Sigma_i$, $\eta_i$ is the sign factor, $\chi_i$ the euler characteristic of $\Sigma_i$. Here we have made explicit the dependence on the parameter $g_s$ (the string coupling) since we are interested to the formal expansion on it. $P \exp \int_{w_i} A$ is the path-order integral, in the notation used by physicists for the homolonomy of $A$.

The starting point of the papers \cite{OGW1} , \cite{OGW2} was to provide a mathematical definition of (\ref{witten}). The Feymann expansion of formula (\ref{witten}) leads to the definition of     moduli space of multi-curves. $MC$-cycle should be considered as the mathematical formulation of point splitting pertubative Chern-Simons. 
These results should be considered as the natural mathematically interpretation of the claim of \cite{W}, and the starting point of Open Gromov-Witten Theory.

The mathematically most interesting case is the case of gauge group $U(1)$. Actually the non-abelian case $U(N)$ follows from the abelian case.  Namely the $U(N)$ expansion associated to $L$ can be reproduced from the $U(1)$ expansion associated to $\tilde{L}$, where $\tilde{L}$ is a brunched covering of $L$ in a tubular neighborhood of $L$ identified with $T^*L$.

\subsubsection{Numerical Invariants}

To define a numerical invariant it is necessary one more step, which was only implicit in \cite{W}. The $BV$ quantizaton of Chern-Simons theory leads to a measure on the moduli space of flat connections which is defined up to an exact form (see \cite{CS}). The partition function is obtained from the integration of this measure on the moduli space of flat connections. Let us focus on the abelian case.

Instead to integrate on the whole moduli space of flat connections,
we  do a little better, and we integrate over a connected component of it.  
The moduli space of abelian flat connections can be identified with the set of $\text{Hom}(H_1(L, \Z), U(1))$, and its connected components are given by the fibers of the map
\begin{equation} \label{flat}
   \text{Hom}(H_1(L, \Z), U(1)) \rightarrow  \text{Hom}(Tors(H_1(L,\Z)), U(1))  .
\end{equation}

In the Feymann expansion of (\ref{witten}) around a flat connection $A_0$ arise the classical factor  $\exp(i \int_{w} A_0)$, where $w = \sum_i w_i$ is the total boundary of the multi-curve. In the abelian case the propagator does not depend on $A_0 $ and  only the classical numerical factor dependence on $A_0$  . It follows that the integral over a fiber of (\ref{flat}) can be factorized as:
\begin{equation} \label{Z}
\Bigl( \int_{\text{connected component} }    \exp(i \int_{\gamma} A_0) \Bigr)  \times    (\text{Feymann diagrams}).
\end{equation}
 It is elementary to check that
$$  \int_{\text{connected component} }   \exp(i \int_{\gamma} A_0)  \neq 0  \Leftrightarrow  [w] \in Tors(H_1(L,\Z))  .$$





Thus we expect that the contribution of the Feymann diagrams on the second factor of (\ref{Z}) leads to an invariant if $\partial \beta \in Tors(H_1(L,\Z))$. This fact is proved mathematically in this paper


The shift by the Chern Class appearing in Corollary \ref{rational} is explained at the physical level using the interpretation of subsection \ref{B-field}. Namely, we need  we need to expand around a connection $A_0$ which curvature $F_0$ satisfying the relation
$F_0 + B|_L  =0 .$
From this we conclude that $ \partial \beta + \frac{1}{g_s} PD_L(B|_L) =0 $ in $ H_1(L,\Q)$.


\subsubsection{ Four Chains} \label{B-field}

As in \cite{OGW1}, to deal with the  boundary nodes of types $E$ (\cite{L})  we assume that $L$ is homologically trivial in $H_3(M, \Q)$ and we fix a four-chain $K$ in $X$ such that $\partial K = L$ holds.
In this subsection we explain the physical origin of the four chain. Roughly, the four chain should be seen as a deformation of the geometry determined by a bulk deformation with a prescribed singularity  along $L$.

As already observed in \cite{OGW1} the existence of $K$ is related to the fact that topological charge of the brane must be zero in  the compact case. The topological charge is provided by   background fields of string theory which can be represented as differential forms. In presence of a brane these differential forms  develop a singularity along the brane. The branes of the $A$-model topological string are associated to lagrangian submanifolds of Maslov index zero. 
In presence of $A$-brane the complexified Kahler form $\omega + i B$ (where we denote by $B$ the B-field) acquire quantum corrections in the string coupling $g_s$ and  a flux such that 
\begin{equation} \label{B-constrain}
 \int_{S^2} b =g_s
\end{equation}
where $S^2$ is any small sphere surrounding $L$. 

\begin{remark}
    Using the  duality between $A$-model and $M$-theory, the singularity of (\ref{B-constrain}) is equivalent to the singularity  of the closed differential four-form $G$  of $M$-theory in presence of a $M5$-brane.  We stress that this duality is a conjecture also at physical level of rigour. 
    
     The singularity of (\ref{B-constrain})  is also dual to the singularity of the holomorphic three form $\Omega$  of the $B$-model in presence of a $B$-brane.
\end{remark}

 Considering $b = \sum_{i \geq 0} g_s^{i} b_i $ as a formal power series in $g_s$ with coefficients differential forms $b_i$ of degree two ,     $b_1$ has a singularity constrained by (\ref{B-constrain}), while
 $b_i$ is smooth for $i \neq 1$.




In order to avoid to deal with singularities we consider the differential geometric blow-up $\hat{X}$ of $X$ along $L$. $\hat{X}$ is a six-dimensional manifold with boundary,  which internal part is $X \setminus L$ and which boundary is the spherical normal bundle $S(N(L \subset X))$ of $L$. $\hat{X}$ is also diffeomomorphic to  the complementary in $X$ of a small tubular neighborhood of $L$. 

The advantage to work with $\hat{X}$ is that on $\hat{X}$ there exist smooth differential  two-forms $b$ satisfying condition (\ref{B-constrain}). Denote by $D^3L$ the disk bundle of $TL$, and let $S^2L = \partial D^3L$ be its boundary. Identify $D^3L$  with a small tubular neighborhood of $L$ in $ X$. 
$b_1$ defines an element of $H^2(X')$ and condition (\ref{B-constrain}) is rewritten as  
\begin{equation} \label{b-singularity}
 b_1 \mapsto 1 \text{  throught   }H^2(\hat{X}) \rightarrow H^2(\partial \hat{X}) \rightarrow  H^2(S^2L) / H^2(D^3L) \cong \Z  .
\end{equation}

\begin{remark}
The Mayer-Vietors sequence for the pair $(X' , T)$ follows the exact sequence: 
$$0 \rightarrow H^2(X) \rightarrow H^2( X') \rightarrow  H^2(S^2L) / H^2(D^3L) \rightarrow 0.$$
Here we use the surjectevity of $H^1(T) \rightarrow H^1(S)$ and the fact that $H^2(S^2(L)) \rightarrow H^3(X)$ has image to $PD([L])$, that vanishes since our assumption $[L]=0$.
\end{remark}

The homology class of the four chain $K$ is thus defined as the Poincare Dual of $[b_1] $
$$ [K] = PD([b_1]) \in H_4(\hat{X} , \partial \hat{X}) = H_4(X,L),$$
and  $ \partial [K] = [L] \in H_3(L) $ is equivalent to relation (\ref{b-singularity}).

It is standard to weight the contribution of a curve $(\Sigma,u)$  by the factor $ \text{exp} \left( i \int_{\Sigma} u^*(\omega) \right) $.  Since in our case the simplectic form $\omega $ is singular 
this factor does not have an immediate interpretation . Nevertheless it is possible to give a meaning to the full expansion  in a more refined way using the fact that the jump of this factor appearing when an internal point of $\Sigma$ crosses $L$ is  compensated by the creation of  a pseudo-holomorhic curve with a boundary node of type $E$. 
Morally we could write $\int_{\Sigma} u^*(b_1) = K \cap u(\Sigma)$, which makes  clear that it is not well defined since both $\partial K$ and $\partial \Sigma$ are not empty, and actually $\partial u( \Sigma) \subset \partial K $.  
To proceed mathematically, 
in this paper (as in \cite{OGW1}) we consider moduli space of  multi-curves with internal marked points   mapped on $K$. With the formalism of the multi-curves   the  problem is solved systematically  as consequence of the fact that we obtain a  moduli space which is closed in a suitable sense .  Roughly the ambiguity to define $ K \cap u(\Sigma)$ is cancelled by the jump problem mentioned before for the (perturbed) moduli space of constant maps. 



\begin{remark}
The problem mentioned above simplifies drastically in the case of genus zero Open Gromov-Witten invariants. In this case, we  consider  the moduli of holomorphic disks and the moduli of holomorphic spheres with one internal marked point mapped on $K$. Since a sphere does not have boundary the issue mentioned above does not appear and a sphere can cross $L$ only in a codimension one region of the parameters (such as the complex structure of $X$). This allow to treat the moduli space of  disks independently and the issue mentioned above reduces simply to a wall crossing phenomena. In contrast, in higher genus the problem appears in the whole space of parameters, the area zero curves play a more important rule, and we are forced to use the four chain $K$ from the beginning in a more fundamental way to define the correct moduli space.
\end{remark}




\begin{remark} 
Property $(\ref{B-constrain})$ plays an important role also in the context of large $N$-duality (\cite{OV}), where it can be formulate as the invariance of the flux of the $B$-field at infinity before and after the conifold transition. This match is important when we take the geometric transition of \cite{OV} as a local model for geometric transitions in more general manifolds.
\end{remark}

\section{Multi-Curve Chain Complex}

In  this section we define Multi-Curve Homology, that is the main algebraic topological object we shall use to define Open Gromov-Witten invariants. That is associated to the following data: 
\begin{itemize}
\item An oriented three manifold $M$,
\item a finite-rank free abelian group $\Gamma$, called \emph{topological charges},
\item an homorphism of abelian groups
$$ \partial : \Gamma \rightarrow H_1(M, \Z) $$
called boundary homomorphism.
\item an homomorphism of abelian groups 
$$ \omega : \Gamma \rightarrow \R $$
called \emph{symplectic area}.
\end{itemize}


\subsection{Decorated Graphs}
A decorated graph $G$ is defined  by  an array
$$(V(G), \{ H_v \}_{v \in V(G)}, E(G), ( \beta_v )_{v \in V(G)}, ( \chi_v )_{v \in V(G)}) $$ 
where 
\begin{itemize}
\item $V(G)$ a finite set, called vertices;
\item for each $v \in V(G)$, $H_v$ is finite set, called half-edges ; 

Set $H(G)= \sqcup_{v \in V(G)} H_v$.
\item  $E(G)$ is partition of $H(G)$ in sets of cardinality one or two, called edges.  The sets of cardinality two are called internal edges $E^{in}(G)$, the sets of cardinality one are called external edges $E^{ex}(G)$; 
\item for each $v \in V(G)$, $\beta_v \in \Gamma$  is called topological charge and $\chi_v \in \Z_{\leq 1}$ is called Euler characteristic of the vertex $v$.
\end{itemize}


We assume that
$$ \beta_v \in \Gamma_{tors} \Rightarrow \beta_v=0 $$

The homology class of $G$ is defined by 
$$\beta = \sum_{v \in V(G)} \beta_v,$$
and its Euler characteristic by 
$$\chi(G)= \sum_{v \in V(G)} \chi_v - |E(G)|  .$$

A vertex $v$ is called unstable if $\beta_v = 0$ and $2 \chi_v - | H_v | \geq 0 $.
The graph $G$ is called stable if all its vertices are stable.




Fix a norm $ \parallel \bullet \parallel$ on $\Gamma_{\R}= \Gamma \otimes  \R$.  For each positive real number $C^{supp} \in \R_{>0}$  denote by $ \mathcal{G}(\beta, \kappa,C^{supp})$ the set of stable decorated graphs with topological charge $\beta$ with $|E^{ex}(G)| - \chi(G)= \kappa$ and
 \begin{equation} \label{support}
\parallel \beta_v \parallel \leq C^{supp}  \omega(\beta_v)
\end{equation}
 for each  $ v \in V(G)$.


Observe that $\mathcal{G}(\beta, \kappa, C^{supp}) $ is a \emph{finite} set.
In the following of this section we fix the constant $C^{supp}$ and we omit the dependence on  $ \parallel \bullet \parallel$ and $C^{supp} $ in the notation.

\subsubsection{operation $\delta_e$}

Let $ e=\{ h_1,h_2 \} \in E_{in}(G) $ be an internal edge of $G$. Define a graph $\delta_e G$ in the following way. We have different cases:


Let $h_1 \in H_{v_1}$ and $h_2 \in H_{v_2}$ for some  $v_1 , v_2 \in V(G)$ with $v_1 \neq v_2$.  $V(\delta_e G)$ is given by replacing $v_1$ and $v_2$ in $V(G)$  by a unique vertex $v_0$.   Define
$$ H_{v_0} =   H_{v_1}  \sqcup  H_{v_2} \setminus  \{ h_1,h_2 \} .$$
$$ \chi_{v_0} =   \chi_{v_0}+ \chi_{v_1} -1, \beta_{v_0} = \beta_{v_0} + \beta_{v_1} .$$

If $h_1 , h_2 \in H_{v_0}$ for some $v_0 \in V(G)$,
$V(\delta_e G) = V(G)$, $\chi_{v_0}$ is decreased by one, and all the  other data remain the same.


\subsubsection{Graphs with marked edges}


Let  $ \mathcal{G}_{l}(\beta, \kappa)$ be the set of pairs $(G, m)$, where $G \in \mathcal{G}(\beta, \kappa)$, and $m=  \{ E_0, E_1,..., E_l \})$ is an increasing sequence of subsets $E_0 \subset E_1 \subset ... \subset E_l$ of $E(G)$.  We consider  $ \mathcal{G}(\beta, \kappa)$ as a subset of $\mathcal{G}_{0}(\beta, \kappa)$ 
identifying $G \in \mathcal{G}(\beta, \kappa)$ with $ (G, \{ \emptyset \}) \in \mathcal{G}_{0}(\beta, \kappa)$,
 

On the set of graphs  $ \mathcal{G}_*(\beta, \kappa)$ we extend the operator $\delta$ defined above as follows.


For $e \in E(G) \setminus E_l$, define $\delta_e(G, m)= (\delta_e G, m) \in \mathcal{G}_{l}(\beta)$, where to define $m$ on the right side we use the identification  $E(G) = E(\delta_e G)  \sqcup \{ e \}$.


In $ \mathcal{G}_*(\beta, \kappa)$ we can moreover define the following operation: for $0 \leq i \leq l$ define $\partial_i (G, \{ E_0, E_1,..., E_l \}) = (G, \partial_i  \{ E_0, E_1,..., E_l \}) \in \mathcal{G}_{l-1}(\beta)$, where $\partial_i  \{ E_0, E_1,..., E_l \} =  \{ E_0,...,\hat{E_i},..., E_l \}$


\subsection{Multi Curve Chain Complex}

We now define a complex $(( \mathcal{C}_d^{MC} )_d,\hat{\partial})$.

For  a finite set $S$, we denote by $\mathfrak{o}_S$ the set of ordering of $S$ up to parity.


For a decorated graph $G$ and $e \in E^{in}(G)$, we denote by $\pi_e : M^{H(G)} \rightarrow M \times M $ the projection to the components associated to $e$ and by $Diag$ the diagonal of $M \times M$.

An object $C \in \mathcal{C}^{MC}_d$ is given by a collection 
$$C_d=  (  C_{G,m}  )_{G,m} ,$$
where for each $(G,m)$
$$C_{G,m} \in C_{|H(G)| + |m| +d}(M^{H(G)}, \mathfrak{o}_{H(G)}  )^{Aut(G,m)}.$$ 
is a $Aut(G)$-invariant chain in $M^{H(G)}$ with coefficients in $\mathfrak{o}_{H(G)}$ of dimension  $|H(G)| + |m| +d$. We require that  
\begin{enumerate}
\item  $C_{G,m}$ is transversal to the big diagonals associated to $E^{in}(G) \setminus E^l$ as stratified space. That is, $C_{G,m}$ is transversal  to   $ \sqcap_{e \in E''} \pi_e^{-1}(Diag)$ for each subset $E' \subset E^{in }(G)  \setminus E^l$;
\item forgetful compatibility holds in the sense of subsection \ref{forget-MC}.
\item $C_{G,m}= C_{cut_{E_0}G,m}$, where $cut_{E_0}G$ is the graph obtained cutting the edges $E_0$. Hence $E^{in }(cut_{E_0}G)= E^{in}(G) \setminus E_0, H(cut_{E_0}G)= H(G)$.  
\end{enumerate}


The operator $\hat{\partial}$ is defined by
\begin{equation} \label{derivation0}
\hat{\partial} = \partial + \delta + \eth : \mathcal{C}_{d}(\beta,\chi)  \rightarrow \mathcal{C}_{d-1}(\beta,\chi)
\end{equation}
where:
\begin{itemize}
\item $ \partial: \mathcal{C}_{d}(\beta,\chi) \rightarrow \mathcal{C}_{d-1}(\beta,\chi)$  is the usual boundary operator on the space of chains;
\item 
$$(\delta C)_G= \sum_{ \delta_e (G',m')= (G,m) } \delta_e C_{(G',m')} $$
where $  \delta_e C$ is the projection on $L^{H(\delta_e G)}$ of $C \cap  \pi_e^{-1} (Diag) $.  
The orientation of f $C \cap  \pi_e^{-1} (Diag) $ is defined according to the relation $ T_*C = N_{Diag}(L \times L) \oplus T_*( C \cap  \pi_e^{-1} (Diag))  $, where $N_{Diag}(L \times L) \subset T_*(L \times L)$ is the normal bundle to the diagonal.
\item 
$ (\eth  C)_{(G,m)} = (-1)^{d+1} \sum_{0 \leq i \leq l} (-1)^i C_{(G,\partial_i m)}.$
\end{itemize}

It is easy to check that 
$$\partial^2=0, \delta^2=0, \eth^2=0, \partial \delta + \delta \partial =0, \partial \eth + \eth \partial =0,  \delta \eth + \eth \delta =0 ,$$ and therefore 
$$\hat{\partial}^2=0  .$$

\subsubsection{Operator $\delta_e$} \label{operator-delta}

Let $(G,m) \in \mathfrak{G}_l$ and $e \in E(G) \setminus E_l$. 
For $C \in C_*(L^{H(G)})^{\text{Aut}((G,m))}$ we define the chain $\delta_e C$ as follows.

Let $\text{Aut}((G,m),e) < \text{Aut}(G,m)$ be the group of automorphims of $(G,m)$  fixing the edge $e$.
Consider the chain $C \cap Diag_e$  as an element of  $C_*(L^{H(G)})^{\text{Aut}((G,m),e) }$.
The orientation of $C \cap  \pi_e^{-1} (Diag) $ is defined according to the relation $ T_*C = N_{Diag}(L \times L) \oplus T_*( C \cap  \pi_e^{-1} (Diag))  $, where $N_{Diag}(L \times L) \subset T_*(L \times L)$ is the normal bundle to the diagonal.


There is an homormorphism of groups  $\text{Aut}((G,m),e) \rightarrow \text{Aut}((\delta_e G,m))$ which, together the projection $L^{H(G)} \rightarrow L^{H(\delta_e(G))}$, induces a map of global orbitfold
$$pr: L^{H(G)}/\text{Aut}((G,m),e) \rightarrow L^{H(\delta_e G)}/ \text{Aut}((\delta_e G,m)).$$

Set
 $$\delta_e C = -pr_*( C \cap Diag_e ).$$


\subsubsection{Forgetful compatibility} \label{forget-MC}




Fix $w^{ann} : F^{ann} \rightarrow M$ a one chain where $ F^{ann}$ is one dimensional compact manifold.

Let $(G,m) \in \mathcal{G}_*$ be a decorated graph.

Let $G''$ be the decorated graph obtained from $G$ removing the half-edges $H_0$. $G''$ is not necessary a stable graph. Let $G'$ be the graph obtained from $G''$ removing the unstable vertices.  
We denote $forget_{H_0}G = G'$.

Let $V^{ann0}$ be the set of vertices $v$ of $G$ with $\chi_v=0, \beta_v=0$, $|H_v|=1$, $H_v \subset H_0$.

If $V(G'') \setminus V(G') \neq V^{ann0}$,  we require $Z_{G,m}=0$.


We assume that 
to $(G',m')$ are associated the following data:
\begin{itemize}
\item A chain 
$$Z_{G',m'}= \sum_{a \in  \mathcal{A}} \rho_a  [f_a, N_a )]    \in C_*(M^{H(G')}),$$
 where $N_a$ are manifolds with corners,  $f_a: N_a \rightarrow M$, and $\rho_a \in \Q$.
\item for each $ a \in \mathcal{A}$ and $v \in  V(G')$  a one dimension compact manifold $F_{a,v}$ ,
\item   
for $ a \in \mathcal{A}, h \in H_0 ,v \in V(G')$ a smooth function  
$\overline{f}_{a,v,h} : N_a \times F_{a,v} \rightarrow M$, $\overline{f}_{a,h}^{ann}: N_a \times F_{ann} \rightarrow M$,
\end{itemize} 
such that  $ \{\overline{f}_{a,v,h} \}_h,  $ are close in the $C^0$ topology and for each $x \in N_a , v \in V(G')$ we have
$$ (\overline{f}_{a,v,h})_* [\{ x \} \times F_{a,v}] =    \partial \beta_{v}  \text{ in } H_1(L, \Z) , $$
$$ (\overline{f}_{a,h})_* [\{ x \} \times F_{ann}]     \text{    is close to $w^{ann}$  in the $C^0$-topology     } . $$



Assume that  $V(G'') \setminus V(G')= V^{ann0}$ and set 
$f_{a,h,v} = f_{a,h}^{ann} $ and $F_{a,v} = F_a^{ann} $ if $v \in V^{ann0}$ and $h \in H_v$.

We require that
$$  Z_{G,m}= \frac{1}{|H_0|!} \sum_s \sum_{a \in  \mathcal{A}} \rho_a  [f_a \times \prod_{h \in H_0} f_{a,s(h), v_h}  , N_a \times  \prod_{h \in H_0}     F_{a,v_h}  ]   .$$
where the sum is over the set of permutations $s$ of $H_0$. $v_h$ denotes the vertex to which $h$ is attached.





\begin{remark}
The definition states that  $Z_{G,m}$ can be expressed in term of a chain on $M^{H(G')} \times \mathfrak{Gen}(\beta, w^{ann})$ (here 
$\mathfrak{Gen}(\beta, w^{ann})$ is defined as in (\ref{generator2})). 
\end{remark}



\subsection{Isotopies}

An element $\tilde{C} \in \tilde{\mathcal{C}}^{MC}_d$ is given by a collection 
$$\tilde{C}_d=  (  \tilde{C}_{G,m}  )_{G,m} ,$$
where for each $(G,m)$
$$\tilde{C}_{G,m} \in C_{|H(G)| + |m|+d +1} (M^{H(G)} \times [0,1], \mathfrak{o}_{H(G)})^{\text{Aut}(G,m)}  $$
  is s a $Aut(G,m)$-invariant chain in $M^{H(G)} \times [0,1]$ with coefficients in $\mathfrak{o}_{H(G)}$ of dimension  $|H(G)| + |m| +d +1$. We require that
\begin{enumerate}
\item  is transversal to the big diagonals associated to $E^{in}(G) \setminus E^l$ as stratified space, i.e. , $\tilde{C}_{G,m}$ is transversal  to   $ \sqcap_{e \in E''} \pi_e^{-1}(Diag) \times [0,1]$ for each subset $E' \subset E^{in }(G)  \setminus E^l$;
\item satisfies the forgetful compatibility in the sense of subsection \ref{forget-MC}.
\item $ \tilde C_{G,m}= \tilde C_{cut_{E_0}G,m}$. 
\end{enumerate}

We the operators $\partial, \delta, \eth$ are extended straightforwardly to $\tilde{\mathcal{C}}$. 

Given two $MC$-cycles $Z_0$ and $Z_0$. An isotopy of $MC$-cycles between $Z_0$ and $Z_1$ is an element of  $\tilde{Z} \in \tilde{\mathcal{C}}^{MC}_0$ such that 
$$ \hat{\partial}\tilde{Z} = Z_0 \times \{ 0 \} - Z_1 \times \{ 1 \} .$$




\subsection{Nice Multi-Curve Cycles}

We now introduce a simpler version of Multi-Curve Homology and we prove that it is equivalent to the one defined in the preview section .


\begin{definition} \label{nice-cycles-definition}
A multi curve cycle $Z$ is called $\emph{nice}$ if 
$$Z_{G,m}=0  \text{   if  }   |m|>0   .$$
\end{definition}
It follows directly from the definition that $Z_{G, \{ E_0 \}}$ does not depends on $E_0$. Hence we can set $Z_G= Z_{G, \{ E_0 \}}$ for any $E^{ex}(G) \subset E_0 \subset E(G)$.





Nice Multi-Curve Cycles can be described as follows.
A generator of nice $MCH$ is defined by an array
\begin{equation} \label{generator2}
(V, H,  ( w_{v,h} )_{v \in V,h \in H} ,  ( w^{ann}_h )_{h \in H}, ( \chi_v , \beta_v )_{v \in V}  )
\end{equation}
where $V$ and $H$ are finite sets, $\beta_v \in \Gamma$,  $\chi_v \in \Z_{\leq 1}$,   $ \{ w_{v,h} \}_{h \in H}$ are closed one dimensional chain on $M$ on the homology class $\partial \beta_v$ , which are close in the $C^0$-topology.  
 $ \{ w^{ann}_h \}_{h \in H}$  are closed one dimensional chains on $M$  which are close to $w^{ann}$ in the $C^0$-topology.  

We assume that $\beta_v \in Tors (\Gamma)$ implies $ \beta_v=0 $ and $\chi_v < 0$ if $\beta_v =0$. Moreover
 $$   \parallel \beta_v \parallel \leq C^{supp}  \omega(\beta_v) \text{   for each  } v \in V(G)  .$$

Denote by $\mathfrak{Gen}(\beta, \chi)^{\dagger}$ the set of objects (\ref{generator2}) such that $\sum \beta_v = \beta, \sum_v \chi_v = \chi$, modulo the obvious equivalence relation.   

Define $\mathfrak{Gen}(\beta, \chi) $ as the subset of $ \mathfrak{Gen}(\beta, \chi)^{\dagger}$ obtained imposing the extra condition  
\begin{equation} \label{generator-transversality2}
w_{v,h} \cap w_{v',h'} = \emptyset \text{    if    } h \neq h'.
\end{equation}
The vector space $\mathcal{Z}_{\beta}^{\diamond}$ of $\mathbf{nice \thickspace MC-cycles}$ in the homology class $\beta$  is the formal vector space generated by $\mathfrak{Gen}(\beta)$.


From an  objects (\ref{generator2}) we can construct a nice $MC$-cycle in the sense of definition \ref{nice-cycles-definition} as follows. 
For a decorated graph $G \in \mathcal{G}$, 
let $ V^{ann0}$ be the set of vertices of $G$ which are area zero annulus with one boundary marked point. Assume that there are no other area zero vertices besides $ V^{ann0}$ and that there exists an identification $V(G) \setminus V^{ann0}= V$ compatible with $ ( \chi_v , \beta_v )_{v \in V} $. 
Denote
$w_{h,v} = w_h^{ann} $ if $v \in V^{ann0}$ and $h \in H_v$.
Set
\begin{equation} \label{nice-graphs}
Z_G = \sum_{s}  \prod_v \prod_{h \in H_v} w_{s(h),v}  ,
\end{equation}
where the sum is made  on the set of bijective maps $s : \sqcup_v H_v  \rightarrow H$.


\subsubsection{Isotopies}


An isotopy of nice-$MC$ cycles $\tilde{Z} $ is defined by a linear combination of objects 
\begin{equation} \label{generator2-iso}
(V, H,  ( \tilde{w}_{v,h} )_{v \in V,h \in H} ,  ( \tilde{w}_{h}^{ann} )_{h \in H}   , ( \chi_v , \beta_v )_{v \in V} , [a,b]   )
\end{equation}
where $[a,b] \subset [0,1]$,  $ \tilde{w}_{h,v} : F_v \times [a,b] \rightarrow M$ for some one dimensional compact manifold $F_v$.

Write 
$$\tilde{Z}= \sum_i r_i (V_i, H_i,  ( \tilde{w}_{v,h,i} )_{v \in V_i,h \in H_i} ,  ( \tilde{w}_{h,i}^{ann} )_{h \in H_i}   , ( \chi_v , \beta_v )_{v \in V_i} , [a_i,b_i]   ) ,$$
with $r_i \in \Q$.

$\tilde{Z}$ defines a one parameter family of nice $MC$-cycles $(Z^t)_t$ discontinuous for a finite number of times. The discontinuity at the time $t_0$ is obtained by the formula:
\begin{equation} \label{jump2}
 Z^{t_0^+} - Z^{t_0^-} =  \pm r'  (V', H',  ( w_{v,h}' )_{v \in V',h \in H'} ,  ( \chi_v' , \beta_v' )_{v \in V'}  )
\end{equation}
where
\begin{itemize}
    \item If  $\tilde{w}_{v_1, h_1}^i$ crosses $\tilde{w}_{v_2, h_2}^i$  transversely we have two cases:
    \begin{itemize}
        \item $v_1 \neq v_2$: $H'= H_i \setminus \{ h_1,h_2 \} $  , $V' = V_i \setminus \{v_1, v_2 \}  \sqcup v_0$, where $v_0$ is a new vertex with associated data $\beta_{v_0}'= \beta_{v_1} + \beta_{v_2}$, $\chi_{v_0}'= \chi_{v_1}+ \chi_{v_2} -1 $, $ w_{ v_0,h, t_0}' = w_{ v_1,h,t_0^- }^i + w_{ v_2,h,t_0^- }^i $ for each $h \neq h_1, h_2$. All the other data remain  the same.
\item  $v_1=v_2$:  $H'= H^i \setminus \{ h_1,h_2\} $, $V'=V^i$  , $\chi_{v_1}'= \chi_{v_1}-1$.  All the other data remain the same.
    \end{itemize}
    \item If $\tilde{w}_{v, h_1}^i$ crosses $\tilde{w}^{ann}_{ h_2.i}$, $H'= H_i \setminus \{ h_1,h_2\} $, $V'=V^i$  , $\chi_{v}'= \chi_{v}-1$.  All the other data remain the same.
\end{itemize}
$r'=r_i$ and the sign is defined by the sign of the crossing.


\subsubsection{From MC-cycles to nice-MC-cycles}

We now show how to obtain from a $MC$-cycle $Z$  a nice-MC-cycle $Z^{nice}$. The construction is not unique, but it is unique up to a (small) isotopy. 

\begin{proposition} \label{nonice-nice}

Let $Z$ be a $MC$-cycle.

There exists a $MC$-one-chain $B$ such that $Z + \hat{\partial}B$ is a nice $MC$-cycle.  The $MC$-chain $B$ is not unique, however 
it is compatible with  isotopies in the following sense.

Let $\tilde{Z}$ be an isotopy between two $MC$-cycles $Z_0$ and $Z_1$. 
Let $B_0$ and $B_1$ be $MC$ one chains such that $ Z_0 + \hat{\partial} B_0$  and $ Z_1 + \hat{\partial} B_1$ are nice $MC$-cycles  constructed as in the proof .
There exists  an isotopy of $MC$ one chains    $\tilde{B}$, such that  $\tilde{Z}  + \hat{\partial} \tilde{B}$  an isotopy of nice  $MC$-cycles between $ Z_0 + \hat{\partial} B_0$  and $Z_1 + \hat{\partial} B_1$.

\end{proposition}

\begin{proof}

We use an inductive argument of graphs.
Let $G_0 \in \mathfrak{G}$. Assume that $Z_{\prec G_0} $ is nice.


If $H(G_0) = \emptyset$, set $B_{G_0}=0$.

Now assume $E(G_0) \neq \emptyset$. 


Set $G^{\sharp} = forget_{E(G_0)}G_0$, which is a graph without half edge. 
Denote by $Z^{\diamond, \dagger}_{G^{\sharp}}$ the  linear combination of elements of $ \mathfrak{Gen}(\gamma)^{\dagger}$ determinate by $G^{\sharp}$ in the forgetful property. 
$Z_{G_0, E(G_0)}$ is obtained applying (\ref{nice-graphs}) to $Z^{\diamond, \dagger}_{G^{\sharp}}$. 

Let $Z^{\diamond}_{G^{\sharp}}$  be a linear combination of elements of $ \tilde{\mathfrak{Gen}}(\gamma)$ close in the $C^0$ topology to 
 $Z^{\diamond, \dagger}_{G^{\sharp}}$. Let $Z^{\diamond}_{G_0}$ be the chain obtained applying (\ref{nice-graphs}) to $Z^{\diamond}_{G^{\sharp}}$.  

For each $m$, set
  $$Z_{G_0,m }^{\diamond} = Z_{G_0}^{\diamond}.$$ 
 
We first consider the elements with $|m|=0$.

\begin{lemma}
There exists $B_{G,\{ E_0 \}}$ such that
$$ \partial B_{G,\{ E_0 \}} = Z_{G,\{ E_0 \}} -  Z_{G,\{ E_0 \}}^{\diamond} $$
Moreover $B_{G,\{ E_0 \}}$ can be taken close in the $C^0$-topology to $ Z_{G,\{ E_0,E(G) \}}$.
\end{lemma}
\begin{proof}
From $\partial Z_{G,\{ E_0, E(G) \}} = Z_{G,\{ E_0 \}} -  Z_{G,\{ E(G) \}}$ and the definition of $Z_G^{\diamond}$ follows that there exists   $B_{G,\{ E_0 \}}$ close in the $C^0$-topology to $ Z_{G,\{ E_0,E(G) \}}$ such that   $ \partial B_{G,\{ E_0 \}} = Z_{G,\{ E_0 \}} -  Z_{G}^{\diamond} $. 
\end{proof}

 The first part of the proposition follows from the following lemma.
 
\begin{lemma} \label{contraction}
There exists $ B_{G_0,m}$ transversal to the diagonals associated to $E \setminus E_l$ such that
\begin{equation} \label{B-construction}
 \partial B_{G_0,m} = Z_{G_0,m} - Z_{G_0,m}^{\diamond}  + \sum_i (-1)^i B_{G_0, \partial_i m }
 \end{equation}
Moreover there exists the lift $ \overline{B}_{G_0,m}$.

\end{lemma}

\begin{proof}
We construct $B_{G, m}$ by induction   as a small perturbation of $(-1)^{l+1} (Z^{\diamond}-Z)_{G,  m \sqcup \{E(G) \}}$.

Let $l \in \Z_{\geq 0}$ and assume that we have constructed $B_{G, m}$ for each $m$ with $|m| <l$. 
From the inductive hypothesis it follows that: 
\begin{enumerate}
\item $\partial ( Z_{G_0,m}^{\diamond} - Z_{G_0,m}  + \sum_i (-1)^i B_{G_0, \partial_i m } )=0$
\item  $ Z_{G_0,m}^{\diamond} - Z_{G_0,m}  + \sum_i (-1)^i B_{G_0, \partial_i m } $ is a small perturbation of  $ \partial (Z^{\diamond}-Z)_{G, m \sqcup \{  E(G) \}} $. 
\end{enumerate}
To obtain property $(1)$ apply $\hat{\partial} (Z^{\diamond}-Z) =0$ to $\partial Z_{G,m}$ and use the inductive hypothesis for $B_{G, \partial_i m }$. 
Property $(2)$ follows from the identity 
\begin{equation} \label{Z'-Z}
 \partial (Z^{\diamond}-Z)_{G, m \sqcup \{ E(G) \}} = (Z^{\diamond}-Z)_{G, m} + \sum_i (-1)^i (Z^{\diamond}-Z)_{G,\partial_i m \sqcup \{  E(G) \}} 
 \end{equation}
and  the assumption that $B_{G, \partial_i m  }$ is a small perturbation of $(Z^{\diamond}-Z)_{G, \partial_i m \sqcup \{  E(G) \}} $

From $(1)$ and $(2)$ it  follows that there exists a small perturbation $ B_{G,m}$ of $ Z_{G, m \sqcup \{  E(G) \}}$ such that (\ref{B-construction}) holds.
\end{proof}

Now assume that $\tilde{Z}$ is an isotopy between two $MC$-cycles $Z^0$ and $Z^1$. To construct $\tilde{Z}^{\diamond}$ we use a similar inductive argument as above.

 Assume that $Z^0_{\prec G_0}, Z^1_{\prec G_0} ,\tilde{Z}_{\prec G_0}$ are nice.
 Let $Z^{0 , \diamond}_{G_0}, Z^{1, \diamond}_{ G_0} $ be chains obtained  using the construction above.

As before, set $G^{\sharp} = forget_{E(G_0)}G_0$. 
Denote by $\tilde{Z}^{\diamond, \dagger}_{G^{\sharp}}$ the  linear combination of elements of $ \tilde{\mathfrak{Gen}}(\gamma)^{\dagger}$ determinate by $G^{\sharp}$ in the forgetful property. 
$\tilde{Z}_{G_0, E(G_0)}$ is obtained applying (\ref{nice-graphs}) to $\tilde{Z}^{\diamond, \dagger}_{G^{\sharp}}$.

 Let $\tilde{Z}^{\diamond}_{G^{\sharp}}$  be a linear combination of elements of $ \tilde{\mathfrak{Gen}}(\gamma)$ close to the $C^0$ topology to 
 $\tilde{Z}^{\diamond, \dagger}_{G^{\sharp}}$ with 
 $ \partial \tilde{Z}^{\diamond}_{G^{\sharp}}= \partial \tilde{Z}^{\diamond, \dagger}_{G^{\sharp}}$ and such that 
 for $T \gg 0$
  $$ \tilde{Z}^{\diamond, <-T}_{G^{\sharp}} =  Z^{0 , \diamond}_{G^{\sharp}} \times \R_{<-T}, \thickspace  \tilde{Z}^{\diamond, >T}_{G^{\sharp}} =  Z^{1 , \diamond}_{G^{\sharp}} \times \R_{>T} .$$ 
 
  Let $\tilde{Z}^{\diamond}_{G_0}$ be the chain obtained applying (\ref{nice-graphs}) to $\tilde{Z}^{\diamond}_{G^{\sharp}}$.  
For each $m$ set
  $$\tilde{Z}_{G_0,m }^{\diamond} = \tilde{Z}_{G_0}^{\diamond}.$$



The one chain isotopy $\tilde{B}$ is obtained by induction as before, using the following
\begin{lemma}
 Let $B_{G_0,m}^0$  and $B_{G_0,m}^1$ constructed from $Z^0$ and $Z^1$ as in Lemma \ref{contraction}. 

There exists $\tilde{B}_{G,m}$
such that 
$$ \partial \tilde{B}_{G,m} = \tilde{Z}^{\diamond}_{G,m} -    \tilde{Z}_{G,m}+ \sum_i (-1)^i \tilde{B}_{G,\partial_i m}
$$

$$\tilde{B}^{ <-T}_{G,m } = B^{0 }_{G_0 ,m  } \times (-\infty , -T) $$
$$\tilde{B}^{ >T}_{ G_0 , m} = B^{1 }_{ G_0 ,m} \times (T, \infty )$$
for $T \gg 0$.
\end{lemma}
\begin{proof}
$\tilde{B}_{G, m}$ is constructed as small perturbation of  $(-1)^{l+1} \tilde{Z}_{G, m \sqcup \{ E(G) \}}$    with an inductive argument similar to the one  of Lemma \ref{contraction} . 

\end{proof}

\end{proof}



\subsection{Multi-Link Homomorphism}
We now want to extract rational numbers from $MCH_0(\gamma)$.
For $[\gamma]=0 \in H_1(L,\Q)$ we shall define an homomorphism from $MCH_0(\gamma)$ to the homology of the point as a generalization of of linking number between two curves on $L$ homologicaly trivial.

Given two not intersecting closed one chain $w_1$ and $w_2$ homological trivial on $H_1(L, \Q)$ , denote by $Link(w_1 , w_2) \in \Q$ their linking number.

For $\gamma \in H_1(L, \Z)$ trivial on $H_1(L,\Q)$, define the multi-link map
\begin{equation} \label{multi-link-map}
\text{Multi-Link}:\mathcal{Z}_{\gamma} \rightarrow \Q
\end{equation}
as
$$ (H ,  ( w_h )_h ) \mapsto \sum_{E} \frac{1}{Aut(E)} \prod_{e \in E} Link (w_e,w_e') .$$
The sum is over the set of partitions of $H$ on subsets of cardinality two. We denote $ (w_e,w_e')  = (w_h,w_{h'})$ for $e = \{ h,h'\} \in E$.

\begin{lemma}
The multi-link map (\ref{multi-link-map}) is invariant by isotopies.
\end{lemma}
\begin{proof}
It is easy to check that $Multi-Link(Z^t)$ does not jump under the formula (\ref{jump}).
\end{proof}

\section{Geometric realization}


Fix $X$ a Calabi-Yau symplectic six manifold and $L$ be an oriented Maslov index zero Lagrangian submanifold of $X$. The moduli spaces of bordered pseudo holomorphic curves without marked points with boundary mapped on $L$  have virtual  dimension zero. A spin structure on $L$ defines an orientation of  the moduli spaces of bordered pseudoholomoprhic  curves. 

Assume that $[L]=0 \in H_3(X, \Z)$ and  fix a four chain $K$ such that $\partial K =L$.

 From $K$ we can define an homology class  :
\begin{equation} \label{chern-four-chain}
\mathfrak{c}(K) = pr_*[(\partial T) \cap K - r ((\partial T) \cap K)]  \in H_1(L, \Z),
\end{equation}
where $T$ is a small tubular neighborhood of $L$, $r : \partial T \rightarrow \partial T$ is the reverse map, 
 $pr: \partial  T \rightarrow L$ is the projection. In (\ref{chern-four-chain}), before apply the projection,  we have used the isomorphism $H_3(\partial T, \Z)^- = H_1(L, \Z)$, where $H_3(\partial T, \Z)^-$ is the vector space of the $r^*$-anti-invariant elements of $H_3(\partial T, \Z)$.






In this section we are going to construct a $MC$-cycle from the moduli space of bordered pseudoholomorphic curves. The data defining the set of decorated graphs are:
\begin{itemize}
\item $\Gamma= H_2(M,L,\Z)$
\item $\partial: \Gamma \rightarrow H_1(L,\Z)$
is the usual boundary in homology.
\item $\omega(\beta) = \int_{\beta} \omega$
for $\beta \in \Gamma$.
\item $\mathfrak{c} = \mathfrak{c}(K) \in H_1(L, \Z)$ is defined using (\ref{chern-four-chain}).
\end{itemize}



The following lemma is standard and it is a simple application of the isoperimetric inequality.
\begin{lemma}
There exists a constant $C^{supp}$ such that    
$$ \parallel \beta \parallel \leq C^{supp} \omega(\beta) $$
 for each class $\beta \in \Gamma$ for which there exists a pseudo-holomorphic curve in class  $ \beta$.
 \end{lemma}
In definition of  graphs $ \mathfrak{G}(\beta,\chi)$ given in (\ref{support})  we fix the constant $C^{supp}$ big enough such that the Lemma above holds.




\subsection{Kuranishi Spaces}

Fix a compact metrizable topological space $\mathcal{M}$. In general we are interested to moduli space $\mathcal{M}$ that is singular. The Kuranishi structure formalism allows us to perturb $\mathcal{M}$ in an abstract way to obtain a smooth moduli space in a suitable sense.
In this sub-section we collect the basic facts about Kuranishi Spaces we shall need. For details we refer to \cite{FO}, or appendix A of \cite{FO3}.

\subsubsection{Kuranishi Structures}

\begin{definition}
For $p \in \mathcal{M}$, a $\mathbf{Kuranishi-neighborhood}$ of $p$ is a quintet $(V_p,E_p,\Gamma_p,s_p, \psi_p)$ where 
\begin{itemize}
\item $V_p$ is a smooth finite dimensional manifold (which may have boundary or corners), 
\item $E_p \rightarrow V_p$ is a vector bundle over $V_p$, 
\item $\Gamma_p$ is a finite group which acts smoothly on $V_p$ and acts compatibly on $E_p$, 
\item $s_p: V_p \rightarrow E_p$ is a $\Gamma_p$-equivariant smooth section, 
\item $\psi_p$ is a homeomorphism from $s_p^{-1}(0)/ \Gamma_p$ to a neighborhood of $p$ in $X$. 
\end{itemize} 
$E_p$ is called the \emph{obstruction bundle} and $s_p$ the \emph{Kuranishi map}.
\end{definition}

A \emph{coordinate changes} between two Kuranishi charts $(V_p,E_p,\Gamma_p,s_p, \psi_p)$, $(V_q,E_q,\Gamma_q,s_q, \psi_q)$, with $q \in \psi_p(s^{-1}(0)/ \Gamma_p)$ is given by a triple $( \hat{\phi}_{pq}, \phi_{pq}, h_{pq})$, where $h_{pq}: \Gamma_p \rightarrow \Gamma_q$ is an injective homomorphism ,  $( \hat{\phi}_{pq}, \phi_{pq}): E_p \times V_{pq} \rightarrow E_q \times V_p $ is an $h_{pq}$-equivariant smooth embedding of vector bundles, where $ V_{pq} $ is a $\Gamma_q$ invariant open subset of $V_q$ such that $o_q \in  V_{pq}$ with $s_q (o_q)=0$ and $\psi_q(o_q) =q$ (see Appendix A of \cite{FO3} for the precise definition) ;

A Kuranishi structure on $\mathcal{M}$ assigns a Kuranishi neighborhood $(V_p,E_p,\Gamma_p,s_p, \psi_p)$ to each $p \in \mathcal{M}$, and for each $q \in \psi_p(s^{-1}(0)/ \Gamma_p)$ a coordinate changes $( \hat{\phi}_{pq}, \phi_{pq}, h_{pq})$. The integer $\text{dim}{E_p}- \text{dim}{V_p}$ is independent of $p$ and is called the \emph{virtual dimension} of $\mathcal{M}$. The coordinate changes are required to satisfy some obvius compatibility condiction (see Appendix A of \cite{FO3} for the precise definition).

We assume that the Kuranishi structure has a \emph{tangent bundle}. This means that the natural map inducted by the fiber derivative of $s_p$  
\begin{equation} \label{tangent}
d_{fiber} s_p : N_{\phi_{pq}(V_q)}(V_p) \rightarrow \frac{E_q|_{\text{Im}(\phi_{pq})}}{\hat{\phi}_{pq}(E_q)} 
\end{equation}
is an isomorphism. Here $N_{\phi_{pq}(V_q)}(V_p)$ is the normal bundle of $\phi_{pq}(V_q)$ in $V_p$.

An \emph{orientation} of a Kuranishi Structure is an orientation of $E_p^* \oplus TV_p $ for every $p$ that is compatible with (\ref{tangent}).


\subsubsection{Fiber Product}
A map $f : \mathcal{M} \rightarrow Y$ between $\mathcal{M}$ and an orbifold $Y$ is called \emph{strongly smooth}  if, in each Kuranishi neighborhood $(V_p,E_p,\Gamma_p,s_p, \psi_p)$, $f$ can be represented by a smooth map $f_p: U_p \rightarrow Y$. The maps $f_p$ are required to fulfill some obvious compatibility conditions. The map $f$ is called \emph{weakly submersive} if each $f_p$ is a submersion.

Let $f : \mathcal{M} \rightarrow Y$ be a strongly continuous and weakly submersive map, let $W$ be a manifold and $g:  W \rightarrow Y$ be a smooth immersion. 
The space
\begin{eqnarray} \label{fiber-product}
\mathcal{M} \times_Y W = \{(p,q) \in \mathcal{M} \times W | f(p) = g(q)  \} 
\end{eqnarray}
has a Kuranishi structure as follows. For each $(p,q)  \in \mathcal{M} \times_Y W $ let $(V_p,E_p,\Gamma_p,s_p, \psi_p)$ be a Kuranishi neighborhood of $p$. Define a Kuranishi neighborhood $(V_{(p,q)},E_{(p,q)},\Gamma_{(p,q)},s_{(p,q)}, \psi_{(p,q)})$ of $(p,q)$, where $V_{(p,q)} = V_p \times_W Y$ and $E_{(p,q)},\Gamma_{(p,q)},s_{(p,q)}, \psi_{(p,q)}$ are naturally defined from $E_p,\Gamma_p,s_p, \psi_p$ (see \cite{FO} or section $A1.2$ of \cite{FO3}).
If $\mathcal{M}$, $Y$ and $W$ are oriented, $\mathcal{M} \times_Y W$ is oriented by the relation 
$ E_p^* \oplus  TV_p =   E_{(p,q)}^* \otimes   T_*(V_{(p,q)})   \otimes   N_W Y$
where $N_W Y$ is the normal bundle of $W$ in $Y$. We write symbolically  
$$   T_*\mathcal{M}  =   N_W Y   \oplus  T_*(\mathcal{M} \times_W Y)     .$$



\subsubsection{Perturbation}

In order to achieve transversality it is necessary to introduce \emph{multisections}.   
A $n$-multisection $\mathfrak{s}$ of the orbibundle $E \rightarrow V$ is a continuous, $\Gamma$-equivariant section of the bundle $S^n E \rightarrow V$, where  $S^n E \rightarrow V$ is the quotient of the vector bundle $E^n \rightarrow V$ by the symmetric group $S_n$. We shall consider only liftable multisections, that is we require  that there exists $\tilde{s}= (s_1, . . . , s_n) : V \rightarrow E^n$ with each $s_i$ continuous such that $s = \pi \circ \tilde{s}$, where $\pi : E^n \rightarrow S^n E$ is the projection. 

To a map $f:\mathcal{M} \rightarrow Y$ strongly smooth map in \cite{FO} it is associated \emph{virtual chain}, that is a smooth simplicial chain on $Y$ of dimension equal to the virtual dimension of $\mathcal{M}$. In order to construct the virtual chain we need to pick a \emph{perturbation data} $\mathfrak{s}$. This involves the choice of \emph{good coordinate system}  and a smooth transverse multisections sufficiently close in $C^0$ to the starting Kuranishi map $s$.  
The transversality require that $\mathfrak{s}$ is transverse on each stratum of the boundary. 
By sufficiently close we need to ensure that the perturbed Kuranishi space remain compact (see \cite{FO}). We denote the virtual chain associate to the perturbation data $\mathfrak{s}$ symbolically as $f_*(\mathfrak{s}^{-1}(0))$.

In \cite{FO} is proved the existence of the perturbation of the Kuranishi structure. We state their result  as in Theorem $A1.23$ of \cite{FO3}:
\begin{lemma} (\cite{FO}) \label{perturbation-lemma-FO}
Suppose that the Kuranishi structure over $\mathcal{M}$ has a tangent bundle. There exist a family of transversal multisections  $\mathfrak{s}_{\epsilon}'$ such that it converges to $s$.

Moreover if $K \subset \mathcal{M}$ is compact subset and $\mathfrak{s}_{\epsilon}''$ is a family of transversal multisections in a Kuranishi neighborhood of $K$ that converges to $s$ then we can take the family  $\mathfrak{s}_{\epsilon}'$ so that it coincides with $\mathfrak{s}_{\epsilon}''$ in a Kuranishi neighborhood of $K$.
\end{lemma}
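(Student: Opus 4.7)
The plan is to construct $\mathfrak{s}_\epsilon'$ by induction over a good coordinate system, using the tangent-bundle hypothesis to propagate transversal perturbations along coordinate changes, and then handle the relative statement with an equivariant partition of unity argument.

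First I would invoke the existence of a \emph{good coordinate system} for the Kuranishi structure on the compact space $X$: finitely many Kuranishi charts $\{(V_i,E_i,\Gamma_i,s_i,\psi_i)\}_{i\in I}$ covering $X$, together with a partial order on $I$ (with $j \prec i$ when $p_j$ lies in the image of chart $i$) and coordinate changes $(\hat\phi_{ij},\phi_{ij},h_{ij})$ satisfying strict compatibility on shrunken domains. This part is standard and is proved in \cite{FO} by repeated shrinking and choosing representatives of orbifold charts; it is not where the work lies, but it is the essential combinatorial scaffold.

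Next I would construct $\mathfrak{s}_\epsilon'$ by induction on the partial order. For a $\prec$-minimal $i$, pick a $\Gamma_i$-equivariant liftable multisection $\mathfrak{s}_i$ that is $C^0$-close to $s_i$ and transverse to the zero section on every open stratum of $V_i$; this is possible because the multisection formalism averages out the finite isotropy and one applies Sard–Smale to a finite-dimensional family of local perturbations, starting from the top-codimension corners of $V_i$ and working inward. For a non-minimal $i$, the inductive data supplies multisections on all $V_j$ with $j\prec i$; the tangent-bundle isomorphism (\ref{tangent}) lets us push these forward to a multisection defined on a neighborhood of $\bigcup_{j\prec i}\phi_{ij}(V_j)$ in $V_i$, by taking $\hat\phi_{ij}(\mathfrak{s}_j)$ in the sub-bundle direction and using the fiber-derivative to identify normal directions with the quotient bundle. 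Extend this to a multisection on all of $V_i$ by a cutoff, then perturb only in the normal direction to achieve transversality on each stratum, again stratum by stratum. The compatibility on overlaps is built in by construction because the extension was chosen to agree with $\hat\phi_{ij}(\mathfrak{s}_j)$ in a neighborhood of $\phi_{ij}(V_j)$.

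For the relative (``moreover'') statement, I would seed the induction with $\mathfrak{s}_\epsilon''$ rather than starting from scratch. Choose nested $\Gamma_i$-invariant neighborhoods $U\subset U'$ of $K$ inside the Kuranishi neighborhood where $\mathfrak{s}_\epsilon''$ is defined, pick an equivariant bump function $\rho$ equal to $1$ on $U$ and supported in $U'$, and interpolate $\rho\,\mathfrak{s}_\epsilon'' + (1-\rho)\,\mathfrak{s}_i^{\mathrm{new}}$ where $\mathfrak{s}_i^{\mathrm{new}}$ is any extension of $\mathfrak{s}_\epsilon''$ into the rest of $V_i$. In the intermediate region $\{0<\rho<1\}$, apply Sard–Smale once more to perturb only by multisections supported in $U'\setminus \overline U$, which does not disturb the prescribed values on $U$. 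The $C^0$-closeness to $s$ is preserved because all perturbations are kept small, which also ensures the perturbed zero locus remains compact and contained in a small neighborhood of $s^{-1}(0)/\Gamma$.

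The main obstacle, in my view, is the stratum-by-stratum transversality in each chart: the multisection must be transverse on the interior of $V_i$, on each codimension-one boundary face, on each corner, and so on, and simultaneously compatible with the coordinate change from every smaller chart $j\prec i$. The tangent-bundle hypothesis is doing exactly the work required here, since it guarantees that the normal directions to $\phi_{ij}(V_j)$ in $V_i$ project isomorphically onto the extra obstruction directions, so transverse perturbations in the normal factor suffice and do not conflict with the inductive data. Once this is set up carefully, everything reduces to standard finite-dimensional transversality together with the combinatorics of the good coordinate system.
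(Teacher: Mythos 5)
The paper does not prove this lemma: it is imported verbatim from the literature (the text says ``In \cite{FO} is proved the existence of the perturbation of the Kuranishi structure. We state the main result as in Theorem $A1.23$ of \cite{FO3}''), so there is no in-paper argument to compare yours against. Your sketch is a faithful reconstruction of the standard Fukaya--Ono proof: good coordinate system, induction over the partial order of charts, pushforward of the inductively constructed multisections along coordinate changes, with the tangent-bundle condition (\ref{tangent}) guaranteeing that $s_i$ is already transverse in the directions normal to $\phi_{ij}(V_j)$ so that only an extension plus a small generic perturbation away from the images of the smaller charts is needed, and stratumwise transversality handled from the deepest corners outward. That is the right architecture and the right role for the tangent-bundle hypothesis.

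Two points deserve more care than your write-up gives them. First, the expression $\rho\,\mathfrak{s}_\epsilon'' + (1-\rho)\,\mathfrak{s}_i^{\mathrm{new}}$ is not literally defined: multisections are sections of $S^nE$, which is not a vector bundle, so a convex combination of an $n$-multisection and an $m$-multisection must be defined branchwise on liftable representatives, producing an $nm$-multisection; this is a standard but necessary device, and it is the reason the lemma restricts to liftable multisections. Second, when a chart $V_i$ receives inductive data from several smaller charts $V_j$, $V_{j'}$ with overlapping images, you assert compatibility ``by construction,'' but this actually requires the cocycle condition for coordinate changes together with the inductive hypothesis on the chart indexed by the common refinement --- the same consistency-on-codimension-two-corners issue that the paper itself has to address explicitly in its Proposition on transversality for multi-curves. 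Neither point is a gap in the strategy, but both are where the actual work in \cite{FO} is concentrated.
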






\subsection{Moduli Space of pseudo-holomorphic bordered curves}

Let $X$ be a symplectic Calabi-Yau $6$-manifold and $L$ a Lagrangian submanifold of Maslov index zero.
Fix a almost-complex structure $J$ on $X$ compatible with the symplectic structure.
In this subsection we review the basic results about moduli space of $J$ psudoholomorphic maps on $X$ with boundary mapped on  $L$. 



Recall that a semi-stable map 
$(\Sigma,u) \in \overline{\mathcal{M}}_{(g,h), n, \vec{m}}(\beta)$ 
is defined by:
\begin{itemize}
\item A topological space $\Sigma$ which is union of Riemmanien surfaces with boundary, which are called irreducible components. The intersection of two components is a finite set of points, called singular points.  The intersection of three different components is empty. A singular point can be or internal or boundary. 
\item A continuous map $u: \Sigma \rightarrow X$, which is $J$-holomorphic on each of the components, such that $u(\partial \Sigma) \subset L$.
\end{itemize}

We write the decomposition in irriducible components of $(\Sigma,u)$ as
\begin{equation} \label{stable-decomposition}
(\Sigma,u) = \bigsqcup_{a \in A} (\Sigma_a , u_a) .
\end{equation}
The singular and marked points of $\Sigma_a$ are called special points of  $\Sigma_a$.

The component $(\Sigma_a,u_a)$ is stable if $u_a$ is not constant or $\Sigma_a$  satisfies the following condition: 
\begin{enumerate}
\item if $\Sigma_a$ is a disk, then  $\Sigma_a$ has at least $3$ boundary special points or $1$ internal and $1$ boundary special point;
\item if $\Sigma_a$ is an annulus, then  $\Sigma_a$ has at least $1$ internal or boundary special point;
\item if $\Sigma_a$ is a sphere, then  $\Sigma_a$ has at least $3$ internal special point;
\item if $\Sigma_a$ is a torus, then  $\Sigma_a$ has at least $1$ internal special point.
\end{enumerate}
The semi-stable map $(\Sigma,u)$ is said stable if $ (\Sigma_a , u_a)$ is stable for each $a \in A$.

Denote by $\overline{\mathcal{M}}_{(g,h), n, \vec{m}}(\beta)$ the moduli space of stable pseudoholomorphic maps with genus $g$, $h$ boundary components, $n$ internal marked points, $\vec{m}=(m_1,m_2,...,m_h)$ number of boundary marked points, in the homology class $\beta \in H_2(M,L)$. As generalization of the Deligne-Manford topology for the moduli of stable curves, $\overline{\mathcal{M}}_{(g,h), n, \vec{m}}(\beta)$ admits a natural topology which is Hausdorff and compact.






Let $\chi= 2-2g-h$ be the Euler Characteristic of the the surface. We introduce a partial order on the type $(\beta, g,h,\overrightarrow{m})$. We define $(\beta', g',h', n', \overrightarrow{m}') \prec (\beta, g,h, n , \overrightarrow{m})$ if $(\omega(\beta'), 2 n'+ \sum_{b'} m_b'-\chi') < (\omega(\beta),  2 n+ \sum_b m_b-\chi)$ in lexicographic order.
The decomposition (\ref{stable-decomposition}) induces a natural stratification of  $\overline{\mathcal{M}}_{(g,h), n, \vec{m}}(\beta)$. The stable maps with $k$ internal singular points and $l$ boundary singular points belongs to the stratum of real codimension $2k+l$. The lower strata can be described in terms of fiber products of moduli of curve of smaller type.

Set 
$$\kappa= 2n- \sum_i|m_i| - \chi  .$$


\begin{lemma} (\cite{L})  \label{argument}
The space $\overline{\mathcal{M}}^{main}_{(g,h),(n, \overrightarrow{m})}(\beta)$  can be endowed with a  Kuranishi structure with corners (natural up to equivalence) invariant by cyclic permutation of boundary marked points and such that the evaluation map on the internal or boundary marked points 
\begin{equation} \label{evaluation map}
\text{ev} :\overline{\mathcal{M}}^{main}_{(g,h),(n, \overrightarrow{m})}(\beta) \rightarrow X^n \times L^{\overrightarrow{m}}
\end{equation}
is strongly smooth and weakly submersive. Moreover on the lower strata  it is compatible with the Kuranishi structure induced by the fiber product.  
\end{lemma}


\begin{proof} We sketch the general argument for the construction of the Kuranishi structure associated to the moduli space of pseudo-holomorphic curves.  For the details see \cite{L}, \cite{FO3}, that is essentially similar to the closed case considered in \cite{FO}.
 

The construction of the obstruction bundle is made inductively on neighborhoods of the strata, beggining with the strata with maximal number of components.

Assume we have defined the Kuranishi structure for moduli spaces of type smaller than $ (\beta, g,h,\overrightarrow{m})$. 
We first construct a preliminary definition of Kuranishi neighborhood $V_u'$  for each $(\Sigma,u) \in \overline{\mathcal{M}}^{main}_{(g,h),(n, \overrightarrow{m})}(\beta)$. $V_u'$ is defined as the product of three spaces $V_u'=V_{deform,u} \times V_{resolve,u} \times V_{map,u}$. $V_{deform,u}$ is associated to the deformations of $\Sigma$, $ V_{resolve,u}$ is associated to the resolutions of the singularities of $\Sigma$,  $V_{map,u}$ is associated to the deformations of $u$ and it is related to the obstruction bundle as follows.

The linearization of   $\overline{\partial}_J$ defines a Fredholm  operator between Banach spaces 
$$D(\overline{\partial}_J):  W^{1,p} (\Sigma, u^*(TM); u^*(TL)) \rightarrow  L^p (\Sigma, u^*(TM) \otimes \Lambda^{0,1}(\Sigma)).$$
We fix $p$ big enough such that the elements of  $W^{1,p} (\Sigma, u^*(TM); u^*(TL))$ are continuous functions . (in the case $\Sigma$ are more components as in (\ref{stable-decomposition}), the space  $W^{1,p} (\Sigma, u^*(TM); u^*(TL))$ is defined as the direct sum of the spaces associated to each component with an obvious matching condition on the singular points.)
Let $E_u' \subset L^p (\Sigma, u^*(TM) \otimes  \Lambda^{0,1}(\Sigma))$ be a finite dimension sub-vector space such that 
$$D(\overline{\partial}_J):  W^{1,p} (\Sigma, u^*(TM); u^*(TL)) \rightarrow  L^p (\Sigma, u^*(TM) \otimes  \Lambda^{0,1}(\Sigma))/E_u'$$ 
is surjective.  
We assume that $E_u'$ is $\text{Aut}(\Sigma,u)$ invariant, its elements are smooth with support away from the boundary and internal marked points. 

The set $V_{map}= \{v : \Sigma \rightarrow M | \overline{\partial}_J(v) \in E_u'   \} $ (here we are identifying $E_u'$ with a subspace of $W^{1,p} (\Sigma, v^*(TM); v^*(TL))$ using parallel transport, for $v$ suitable close to $u$) is a smooth finite dimension manifold with corners. 
We can choice $E_u'$ such that the evaluation map is weakly submersive.

On a point $u$ of the lower stratum  with more than one component (as in \ref{stable-decomposition}) , the  Kuranishi neighborhood $V_u'$ is constructed using a gluing argument from the fiber product of the Kuranashi neighborhood  associated to each $\Sigma_a$, using the Kuranishi structures defined on the moduli of smaller type. The obstruction bundle is the direct sum  the obstruction bundles of each component.  In the case $\Sigma$ has unstable components we replace $V_{map}$ by the orthogonal complement of  $Lie(\text{Aut}(\Sigma))$ in  $V_{map}$.




To actually define a Kuranishi structure we need to modify the obstruction bundles in order to allow for coordinate changes. Assume that $V_u'$ is closed, and let $u_1,u_2,..., u_N$ be a finite number of points of $\overline{\mathcal{M}}^{main}_{(g,h),(n, \overrightarrow{m})}(\beta)$ such that $ \bigcup_i  \mathring{V}_{u_i}' = \overline{\mathcal{M}}^{main}_{(g,h),(n, \overrightarrow{m})}(\beta) $. For $u \in \overline{\mathcal{M}}^{main}_{(g,h),(n, \overrightarrow{m})}(\beta)$, take 
\begin{equation} \label{obstruction-modification}
 E_u = \bigoplus_{i | u \in  {V}_{u_i}'}  E'_{u_i}
\end{equation}
and use $E_u$ to define $V_u$ as before. 
In order to make sense  (\ref{obstruction-modification}) we need to identify $E'_{u_i} $ with a subspace of $L^p (\Sigma, u^*(TM) \otimes  \Lambda^{0,1}(\Sigma))$. This involve in particular the choice of a biholomorphism between $\Sigma$ and an element of $V_{deform,u_i} \times V_{resolve,u_i}$.
In the case $\Sigma_i$ have unstable components the set of these biholomorphims come in families which can be locally parametrized by a  neighborhood of zero in  $Lie(\text{Aut})(\Sigma_i) $.  Since $\text{Aut}(\Sigma)$ is not a finite group, this causes a problem since $E_u'$ cannot be taken $\text{Aut}(\Sigma)$ invariant. A way to get around this problem is to use the element of $V_{deform} \times V_{resolve}$ which minimize in a suitable sense the distance between $u$ and $u_i$ (see \cite{FO})  . 

The modification (\ref{obstruction-modification}) can be made such that the Kuranishi structure on the lower strata is not modified, as follows. Take a finite set of $u_i$ in the lower strata such that the complementary of $\sqcup V_{u_i}$ is compact. Then we take a finite number of $V_{u_j}'$ which do not  intersect the lower strata and such that $ \bigsqcup_i  \mathring{V}_{u_i}' \sqcup  \bigsqcup_j  \mathring{V}_{u_j}' = \overline{\mathcal{M}}^{main}_{(g,h),(n, \overrightarrow{m})}(\beta) $. Use this covering to implement (\ref{obstruction-modification}). 

Finally, we can make the Kuranishi structure cyclic symmetric adding to the obstruction bundle its images by the cyclic symmetry.
If the Kuranishi structure was cyclic symmetric on the moduli spaces of smaller type, this step does not modify the Kuranishi structure of the lower strata.


\end{proof}

\begin{lemma} (\cite{L}) \label{orientation-curves}
A spin structure on $L$ and an ordering of boundary marked points induce an orientation of $\overline{\mathcal{M}}^{main}_{(g,h),(n, \overrightarrow{m})}(\beta)$. 
\end{lemma}





We now introduce  some notation which will be useful on the paper.

Denote by $  \overline{\mathcal{M}}_{(g,V, D, (H)_v)}(\beta) $ 
the moduli  of curves of genus $g$ , homology class $\beta$, whose boundary components are labelled by $V$, internal marked points are  labelled by $D$ and boundary  marked points are labelled by $H_v$ for each $v$.


After the choice of an order for $ V, D$ and  $H_v$ , the moduli $  \overline{\mathcal{M}}_{(g,V, D, (H)_v)}(\beta) $  can be identified  with
 $ \overline{\mathcal{M}}^{main}_{(g,h),(n, \overrightarrow{m})}(\beta)$ , where $h= |V|$, $n= |D|$
$m_i = |H_i|$.

A Kuranishi structure on $  \overline{\mathcal{M}}_{(g,V, D, (H)_v)}(\beta) $ is defined from 
a Kuranishi structure on $ \overline{\mathcal{M}}^{main}_{(g,h),(n, \overrightarrow{m})}(\beta)$ which is compatible with the cyclic symmetry of the boundary marked points, the permutation of boundary components and the permutations of internal marked points .




\subsubsection{Boundary Strata}

Let $\beta \in H_2(X,L)$, $g \in \Z_{\geq 0}$, $V$ and $D$ finite sets and, for each $v \in V$, $H_v$ a finite cyclic order set.
In this subsection we describe the corner faces of $\overline{\mathcal{M}}_{(g, D,V, (H_v)_v) }(\beta)$.


Let $(\Sigma,u) \in   \overline{\mathcal{M}}_{(g,V,D, (H_v)_v) }(\beta)$ with the decomposition in stable components given as in (\ref{stable-decomposition}).  Consider the equivalence relation on $A$ generated by $a \sim a'$ if $\Sigma_a $ and $\Sigma_{a'} $ are connected by an internal singular point. Let $B$ be the set of the equivalence classes and for $b \in B$ let $\Sigma_b = \sqcup_{a \in b} \Sigma_a $. Thus, we have the decomposition of $(\Sigma,u)$  
\begin{equation} \label{stable-decomposition2}
\Sigma = \bigsqcup_{b \in B} \Sigma_b 
\end{equation}
where now the components $\Sigma_b$ are connected only by boundary singular points. The decomposition (\ref{stable-decomposition2}) can be described in terms of the decorated graphs of subsection \ref{section-graphs}, where the boundary nodal points of $\Sigma$ correspond to internal edges of the graph. We write the formula explicitly as follows.


Let $(\beta,(g, D,V, (H_v)_v) )$ be the decorated graph with one only component of genus $g$ , topological charge $\beta$,  vertices $V$, half edges $(H_v)_v$, degenerate vertices $D$.  Let
$\mathfrak{G}(\beta,(g, D,V, (H_v)_v) )$ be the set of graphs as in  (\ref{sub-graphs}) with $G= (\beta,(g, D,V, (H_v)_v) )$.




As a consequence of Lemma \ref{argument} we have a bijection 
$$ \mathfrak{G}(\beta,(g, D,V, (H_v)_v) ) \leftrightarrow  \{    \text{   corner faces of  }   \overline{\mathcal{M}}_{(g, D,V, (H_v)_v)}(\beta)  \} .$$
Denote by  $ \overline{\mathcal{M}}_{(g, D,V, (H_v)_v)}(\beta)(G',E') $ the corner face corresponding to $(G',E') \in \mathfrak{G}(\beta,(g, D,V, (H_v)_v) )$.   

Write $E' = E^{in}(G') \sqcup D'$, with $D' \subset D(G')$.    $   \overline{\mathcal{M}}_{(g, D,V, (H_v)_v)}(\beta)(G',E') $  comes  with an identification  of Kuranishi spaces
\begin{multline} \label{corner-faces}
    \overline{\mathcal{M}}_{(g, D,V, (H_v)_v)}(\beta)(G',E') \cong  \\
    \left( \prod_{c \in Comp(G')}   \overline{\mathcal{M}}_{(g_c, D_c,V_c, (H_v)_{v \in V_c})}(\beta_c) \times_{L^{H(G')} \times X^{D'}} ( (Diag)^{E^{in}(G')} \times L^{D'}) \right) / \text{Aut}(G',E') ,
\end{multline}
where $Diag$ is the diagonal of $L \times L$.


\subsubsection{Forgetful Compatibility} \label{forgetful}


In this subsection we  define the notion of forgetful compatibility for the Kuranishi structures of the moduli space of pseudo-holomorphic curves.


Fix $(\Sigma,u) \in \overline{\mathcal{M}}_{(g,V,D, (H_v)_v) }(\beta)$. Let $h \in H_{v_0}$ and set $H_v' = H_v$ if $v \neq v_0$,
$ H_{v_0}' = H_{v_0} \setminus \{ h\}   $.  

Define $\mathbf{forget}_h (\Sigma,u) \in \overline{\mathcal{M}}_{(g,V,D, (H_v')_v) }(\beta)$ as follows.
Let $ (\Sigma,u) = \sqcup_{a \in A} (\Sigma_a,u_a) $  be 
the decomposition of $(\Sigma,u) $ in irreducible components as in (\ref{stable-decomposition}).
Let $(\Sigma_0,u_0)$ be the component to which $h$ is attached. If $(\Sigma_0,u_0)$ is stable after removing $h$, $ \mathbf{forget}_h (\Sigma,u)$ is given by removing $h$ from $\Sigma_0$. Otherwise, $u$ is constant on $\Sigma_0$ and we have the following cases:
\begin{enumerate}
\item $\Sigma_0$ is a disk with two boundary marked points, one boundary singular point, and no special internal points; 
\item $\Sigma_0$ is a disk with one boundary marked point, two boundary singular points, and no special internal points;  
\item  $\Sigma_0$  is a disk with one internal singular point and no other special points rather than $h$. 
\end{enumerate}

In case $(1)$,  let $h'$ be the boundary marked point different from $h$.  Remove $\Sigma_0$ and rename the singular boundary point of $\Sigma_0$ to $h'$. 

In case $(2)$,  $h$ is the only boundary marked point. Remove $\Sigma_0$ and identify the two singular boundary points of $\Sigma_0$. 

In case $(3)$ remove $\Sigma_0$ and transform the singular internal point of $\Sigma_0$ in an internal marked point. This gives a boundary node of type $E$.







Let  $p=(\Sigma,u) \in  \overline{\mathcal{M}}_{(g,V,D, (H_v)_v) }(\beta) $ and  $p'= \mathbf{forget}_h (\Sigma,u) \in \overline{\mathcal{M}}_{(g,V,D, (H_v')_v) }(\beta)$. 
A  Kuranishi neighbourhood  $(V_p,E_p,\Gamma_p,s_p, \psi_p)$ of $p$ is forgetful compatible with the  Kuranishi neighbourhood $(V_{p'},E_{p'},\Gamma_{p'},s_{p'}, \psi_{p'})$ of $p'$, if the following happen.

Assume first that $p'$ is not in the case $(3)$ above.  Let $F$ be an open interval if $H_{b_0}' \neq \emptyset$, or the one dimension circle if  $H_{b_0}' = \emptyset$.
Then  ${V}_p = V_{p'} \times F $, ${E}_p = E_{p'} \times F $, ${\Gamma}_p= \Gamma_{p'}$, ${s}_{p'}(x, t) = (s_p(x), t)$. 



Assume now that $p'$ is the case $(3)$ above, i.e., $(\Sigma',u')$ is in the boundary face of a node of type $E$. 
We can assume that $V_{p'}$ factorize as $V_{p'}= U' \times [0, \epsilon)$, for some open set $U'$ and $\epsilon >0$ small. Accordingly to this we assume that  $E_{p'} = E_{p'}' \times [0, \epsilon)$, $s_{p'}= s_{p'}' \times [0, \epsilon) $ and 
$ \Gamma_{p'}$ acts trivially on the $[0, \epsilon)$ factor.
Then $V = U' \times D^2_{\epsilon}  $, where $D^2_{\epsilon}$ is the disk of radius $\epsilon$. The forget map $V_{p} \rightarrow V_{p'}$ is given by the product of the identity map of $U'$  and the submersion $   D^2_{\epsilon} = \frac{[0, \epsilon) \times S^1}{{0} \times S^1} \rightarrow  [0, \epsilon)$. 
Accordingly to this we have $E_{p}= E_{p'}'\times D^2_{\epsilon} $, $s_{p}= s_{p'}'\times D^2_{\epsilon}$ and
$ \Gamma_{p} = \Gamma_{p}'$ acts trivially on the $ D^2_{\epsilon}$ factor.

\begin{definition}
We say that the boundary puncture $h$ is $\mathbf{forgetful-compatible}$ if the Kuranishi structures 
$ \overline{\mathcal{M}}_{(g,V,D, (H_v)_v) }(\beta)$ and $ \overline{\mathcal{M}}_{(g,V,D, (H_v')_v) }(\beta)$
are related as above.
\end{definition}

\subsection{Decorated Graphs} \label{section-graphs}
A decorated graph $G$ consists in an array 
$$(Comp, (V_c, D_c, \beta_c ,g_c )_c, (H_v)_v, E)$$ 
where 
\begin{itemize}
\item  $Comp(G)$ is a finite set, called set of components of $G$;
\item For each $c \in Comp(G)$
\begin{itemize}
\item  $V_c$ is a finite set, the set of boundary components or vertices of $c$;
\item  $D_c$ is a finite set , the set  of internal marked points of $c$;
\item  $\beta_c \in \Gamma$, called charge of $c$
\item   $g_c \in \Z_{\geq 0}$, called  genus of $c$.  
\end{itemize}
Set 
$$\beta(G) :=  \sum_{c \in Comp(G)} \beta_c \in \Gamma , V(G) := \bigsqcup_{c \in Comp(G) } V_c , D(G):= \bigsqcup_{c \in Comp(G)} D_c .$$
\item For each $v \in V(G)$, $H_v$ is a cyclic ordered finite set. Set $H(G)= \sqcup_{v \in V_G} H_v$. $H(G)$ is called  the set of half-edges of $G$;
\item  $E$ is a partition of  $H(G)$ in sets of cardinality one or two, whose elements are called edges of $G$. The elements  of cardinality two are called internal edges $E^{in}(G)$, the elements of cardinality one external edges $E^{ex}(G)$;
\end{itemize}


We assume that
$$ \beta_c \in \Gamma_{tors} \Rightarrow \beta_c=0 $$


A component $c$ is called unstable if $\beta_c = 0$ and $2 \chi_c - | H_c | \geq 0 $.
The graph $G$ is called stable if all its components are stable.

Fix a norm $ \parallel \bullet \parallel$ on $\Gamma_{\R}= \Gamma \otimes  \R$.  For each positive real number $C^{supp} \in \R_{>0}$  denote by $ \mathcal{G}(\beta, \kappa,C^{supp})$ the set of stable decorated graphs with topological charge $\beta$ with $|E^{ex}(G)| - \chi(G)= \kappa$ and
 \begin{equation} \label{support-not-ab}
\parallel \beta_c \parallel \leq C^{supp}  \omega(\beta_c)
\end{equation}
 for each  $ c \in Comp(G)$.

Observe that $\mathfrak{G}(\beta, \kappa, C^{supp}) $ is a \emph{finite} set.
In the following of this section we fix the constant $C^{supp}$ and we omit the dependence on  $ \parallel \bullet \parallel$ and $C^{supp} $ in the notation.

In the next subsection, for each $e \in E^{in}(G) \sqcup D(G)$ it is defined the graph $\delta_e G$.  The operation $\delta_e$ associated to different edges commute:
$$   \delta_{e_1} \circ \delta_{e_2} G =  \delta_{e_2} \circ \delta_{e_1}  G \text{   for each   }  e_1 , e_2 \in E^{in}(G) \sqcup D(G).$$
Given a set of edges $\{e_1,e_2,...,e_n\} \subset E^{in}_G \sqcup D(G)$ we denote by $G/\{ e_1,e_2,...,e_n \} $ the graph obtained applying each $\delta_{e_i}$ to $G$: 
$$  G/\{e_1,e_2,...,e_n\} =  \delta_{e_1} \circ \delta_{e_2} \circ ...  \circ \delta_{e_n} (G) . $$

Define  the decorated graph
$$ \Sigma_G =  G / \{ E^{in}(G ) \sqcup D(G) \} .$$ 
We have
$$D(\Sigma_G)= \emptyset, \quad E^{in}(G)= \emptyset  , \quad \beta(\Sigma_G)= \beta(G), \quad E^{ex}(\Sigma_G)  = E^{ex}(G).$$
We can identify $\Sigma_G$ with a (not necessarily connected) surfaces with boundary marked points $E^{ex}(G)$. 
We call $g(G)= g(\Sigma_G )$ the genus of $G$ and $h(G) = |V(\Sigma_G)| $ the number of boundary components of $G$.

For a decorated graph $G$ set
\begin{equation} \label{sub-graphs}
 \mathfrak{G}(G) =\{  (G',E') | G' \in \mathfrak{G},  E' \subset E^{in}(G') \sqcup D(G'),  G'/E' \cong G \}/\sim .
\end{equation}





\subsubsection{Operation $\delta_e$}
To a graph $G$ and $e \in E^{in}(G) \sqcup D(G)$ it is associated a graph $\delta_e G$  as follows.

We first consider the case $e \in D(G)$.
Let $c_0 \in Comp(G)$ be the component such that $e \in E_{c_0}$.  $\delta_e G$ is defined discarding $e$ from $E_c$ and adding to $V_{c}$ a new vertex $v_e$, with $H_{v_e} = \emptyset$. 
 All the other data defining $G$ stay the same.

We now consider the case $ e \in E^{in}(G)$
Let $e=\{ h_1,h_2 \} \in E^{in}(G)$ be an internal edge of $G$. 
We have different cases:
\begin{itemize}
\item
Assume $h_1 \in H_{v_1}$ and $h_2 \in H_{v_2}$ for $v_1 , v_2 \in V(G)$ with $v_1 \neq v_2$.  Define ordered sets $I_1$ and $I_2$ such that $H_{v_1 }= \{ h_1 , I_1  \}$ and  $H_{v_2 }= \{ h_2 , I_2  \}$ as cyclic ordered sets. $V(\delta_e G)$ is defined by replacing in $V(G)$ the vertices $v_1$ and $v_2$   by a unique vertex $v_0$, with $ H_{v_0} =  \{ I_1 , I_2  \} .$

\begin{itemize}
\item If  $v_1 \in V_{c_1}$, $v_2 \in V_{c_2}$ for  $c_1 , c_2 \in Comp(G)$ with $c_1 \neq c_2$, $Comp(\delta_e G)$ is obtained replacing in $Comp(G)$ the components $c_1$ and $c_2$ with a unique component $c_0$. $V_{c_0}$ is obtained by $V_{c_1} \sqcup V_{c_2} $ replacing the vertices $v_1$ and $v_2$ with $v_0$. $E_{c_0}=E_{c_1} \sqcup E_{c_2}$, $  g_{c_0} = g_{c_1}+ g_{c_2}$.
\item If  $v_1,v_2 \in V_{c_0}$ for some  $c_0  \in Comp(G)$, set $Comp(\delta_e G) =Comp(G)$ with the genus $g_{c_0}$ increased by one and all the other data of $c_0$ remain the same. 
\end{itemize}
\item
Assume $h_1,h_2 \in H_{v_0}$ for $v_0 \in V(G)$. Write the cyclic order set $H_{v_0 }$ as $H_{v_0 }= \{ h_1 , I_1 , h_2, I_2 \}$ for some order sets $I_1$ and $I_2$.  $V(\delta_e G)$ is given by $V(G)$ replacing $v_0$ by two vertices $v_0', v_0''$ with
$ H_{v_0'} =  I_1 \text{    }  H_{v_0''} =  I_2 .$

Set $Comp(\delta_e G) = Comp(G)$. Let $c_0 \in Comp(G)$ such that $v_0 \in Comp_{c_0}$. $V_{c_0}$ in $\delta_e G$ is obtained replacing $v_0$ with $v_0', v_0''$,   and all the other data of $c_0$ remain the same. 
\end{itemize}







\subsection{Moduli Space of Multi-Curves} \label{Moduli-interp-section}


To a graph $G \in \mathfrak{G}(\beta,g,V,D,(H_v)_v)$ we associate the moduli space of multi-curves
\begin{equation}    \label{multi-curve0}
 \overline{\mathcal{M}}_{G} :=    \left( \prod_{c \in C(G)} \overline{\mathcal{M}}_{g_c, V_c, D_c, (H_v)_{v \in V_c} }(\beta_c)  \right)  / \text{Aut}(G) 
 \end{equation} 
where the boundary marked points which belong to $E^{ex}(G)$ are considered forgetful compatible, and all the other boundary marked points are weakly submersive. 

In order to construct a $MC$-cycle from the moduli space of multi-curves it is necessary to extend the definition to decorated graphs with marked edges and define suitable constraints on 
 the associated moduli space.  
In these moduli spaces the forgetful compatible and weakly submersive Kuranishi structures can be considered at the same time. This fact makes the construction of the $MC$-cycle relatively easy and it has not an analogous  when we work with the moduli space of pseudo-holomorphic curves.


 



Define the set of  decorated graphs $\mathfrak{G}_l$  as the set of pairs $(G,m)$, where $G \in \mathfrak{G}$ and  
$$m= \{ E_0, E_1,..., E_l \}$$ 
with $E_0 \subset E_1 \subset ... \subset E_l \subset E(G)$ be an increasing finite sequence of subsets of $E(G)$.
On the set  $\mathfrak{G}_l$ we can extend straightforwardly the operator $\delta_e$ for $e \in D(G) \sqcup (E^{in}(G) \setminus E_l)$.  Let $\mathfrak{G}_*= \sqcup_l \mathfrak{G}_l$.

 



To the pair $(G,m) \in \mathfrak{G}_{l}$ we associate the moduli space
\begin{equation}    \label{multi-curve-interp0}
 \overline{\mathcal{M}}_{G,m} :=    \left( \prod_{c \in C(G)} \overline{\mathcal{M}}_{g_c,D_c, V_c, (H_v)_{v \in V_c} }(\beta_c)  \right)  / \text{Aut}(G,m) \times \Delta^l
 \end{equation} 
 where $\Delta^l$ is the standard simplex  of dimension $l$.

To each $e \in E^{in}(G) $,  the evaluation map on the punctures associated to $e$, defines a map $\text{ev}_e:  \overline{\mathcal{M}}_{G,m} \rightarrow L \times L$.  For $e \in E^{in}(G) \setminus E_l$, set the closed subspace of $\overline{\mathcal{M}}_{G,m}$:
\begin{equation} \label{bedge}
\delta_e \overline{\mathcal{M}}_{G,m}  := \left( (\text{ev}_e)^{-1}(Diag) \right) /  \text{Aut}(G,m,e).  
\end{equation}


For $e \in D(G)$ set
\begin{equation} \label{bcomponent}
\delta_e \overline{\mathcal{M}}_{G.m}  =  \left( (\text{ev}_e)^{-1}(L)  \right) /  \text{Aut}(G,,m,e).  
\end{equation}


Given an element $(G,m) \in \mathfrak{G}_l$ set
$$ \mathfrak{G}(G,m) = \{ (G',m' , E') | \thickspace (G',m') \in \mathfrak{G},  \thickspace E' \subset E(G') \setminus E_l', \thickspace G'/E' \cong G, \thickspace m' \subset m \}  .$$ 
In the condition $m' \subset m$ we use the isomorphism $G'/E' \cong G$ in order to identify the elements of $m'$ with  a set of edges of $G$. 

  

\subsubsection{Forgetful Compatibility} \label{forget-multi-moduli}



Let $(G,m) \in \mathfrak{G}_l$ and $e \in E^{ex}(G)$. We want to define a decorated graph $(G',m')= forget_e(G,m) \in \mathfrak{G}_l$ which is obtained removing the edge $e$.  The definition of $(G',m')$   is straightforward in the case that $G$ is stable after removing $e$ .  
If $G$ becomes unstable after removing $e$ we proceed as follows.

Let $v \in V(G)$ and $c \in Comp(G)$ such that $v \in V_c$ and $e \in H_v$. Since $c$ is unstable after removing $e$ we have $\beta_c=0$ and $g_c = 0$.
Let $G_e$ be the decorated graph defined by
$$\beta(G_e)= 0 , Comp(G_e)= \{  c \}, V(G_e)= V_c, D(G_e)= D_c,$$
$$ H(G_e)=H_c, E^{in}(G_e) = \{ e \in E^{in}(G)| e \subset H_c \} .$$
There are the following cases:
\begin{enumerate} \label{unstable-cases}
\item  $|V_c|=1, |D_c|= 0, |H_c| =3 ,  |E^{in}_c|=  0$; \label{disk}
\item  $|V_c|=1, |D_c|=0$,  $|H_c| =3 $,  $|E^{in}_c|=1$;  \label{disk-edge}
\item   $|V_c|=2, |D_c|=0$ , $|H_c| = 1 $;   \label{annelus} 
\item    $|V_c|=1, |D_c|=1$ ,$|H_c| = 1 $.   \label{annelus-deg}
\end{enumerate}

Denote by $Disk0$ the graph defined by  (\ref{disk}).
Let $Ann0$  be the set graphs given by (\ref{disk-edge}), (\ref{annelus}) and (\ref{annelus-deg}).

 We say that $e$ is not removable if $G_e$ is given by (\ref{disk-edge} )  and  $E^{in}_c \subset E_l$ . In all the other cases we say that $e$ is removable.


In the case (\ref{disk}), define $G'$ by removing the component $c$ and gluing the two elements of $H_v \setminus \{ e \}$. More precisely, let $H(G_e)= \{ h_1, h_2, e \}$.  If there exists $h_2' \in H(G)$ such that $h_1 \in E^{ex}(G)$, $\{ h_2, h_2' \} \in E^{in}(G)  $, declare $h_2' \in E^{ex}(G')$. 
  If there exists $h_1', h_2' \in H(G)$ such that $\{ h_1, h_1' \} \in E^{in}(G)$, $\{ h_2, h_2' \} \in E^{in}(G)  $
set $\{ h_1', h_2' \} \in E^{in}(G')$.
If $\{ h_1, h_1' \} \in E_i,  \{ h_2, h_2' \} \in E(G) \setminus E_{i-1} $  set  $\{ h_1', h_2' \} \in E_i$. 

In the cases $G_e \in Ann0$,
$G_e$ is a connected component of $G$ and we define $G'$ removing $G_e$ from $G$.

Assume that $e$ is removable. We say that the Kuranishi structure $ \overline{\mathcal{M}}_{G,m}$ is forgetful compatible with respect to $e$ if the following happen:
\begin{itemize}
    \item If $G$ is stable after removing $e $  we require that the Kuranishi structure of  
$ \overline{\mathcal{M}}_{G,m}$ is the pull-back of the Kuranishi structure of  $\overline{\mathcal{M}}_{G',m'} $
 as in subsection \ref{forgetful}.
 \item In the case $G$ is unstable after removing $e$  we require that:
\begin{equation} \label{forgetful-compatibility-multi-curves}
 \overline{\mathcal{M}}_{G,m}= \overline{\mathcal{M}}_{G',m'} \times  \overline{\mathcal{M}}_{G_e} .
\end{equation}
\end{itemize}

Assume that there are not external removable external edges. Let  $G^{\clubsuit}$ be the subgraph of $G$ which is the union of the connected components isomorphic to (\ref{disk-edge}). 
Write 
$ (G,m) = (G',m') \sqcup (G^{\clubsuit}, m^{\clubsuit} ) $.  $G'$ is a subgraph of $G$ without external edges.

For each integer $l \geq 0$ fix  a  decomposition of the standard $l$-simplex $\Delta^l= [0,1,...,l]$ 
\begin{equation} \label{fact-symplex}
     \Delta^l = (\bigsqcup_{0 \leq v \leq l} [0,1,...,v] \times [v,v+1 ...,l] )/ \sim 
\end{equation}
where the face $\partial_{v+1} [0,1,...,v, v+1] \times [v+1, ...,l]$ of  $[0,1,...,v, v+1] \times [v+1, ...,l]$ is identified with the face $ [0,1,..., v] \times \partial_{v} [v, ...,l]$ of  $[0,1,...,v] \times [v, ...,l]$. 
We assume that the decomposition induced on each boundary face of $\Delta^l$  is compatible with the decomposition of $\Delta^{l-1}$ .

Using (\ref{fact-symplex}) we obtain  an identification of moduli spaces 
\begin{equation} \label{moduli-factorization}
 \overline{\mathcal{M}}_{ G,m} = 
(\sum_{0 \leq r \leq l}  \overline{\mathcal{M}}_{G', m'_{[0,r]}} \times   \overline{\mathcal{M}}_{G^{\clubsuit}, m^{\clubsuit}_{[r,l]}} )/ \sim.
 \end{equation}
We require that $ \overline{\mathcal{M}}_{ G,m}$ is equipped with the Kuranishi structure  compatible with the right side of (\ref{moduli-factorization}).



\subsubsection{Kuranishi Structures}

We are interested in equipping the moduli spaces  $ \overline{\mathcal{M}}_{G,m}$  with a Kuranishi structure with the following properties:
\begin{enumerate}
\item $ \overline{\mathcal{M}}_{G,m}$ is forgetful compatible;
\item the evaluation map 
$$\text{ev} : \overline{\mathcal{M}}_{G,m} \rightarrow L^{H(G) \setminus H_l} \times X^{D(G)}$$ 
is weakly submersive;
\item   the corner faces of $\overline{\mathcal{M}}_{(G,m)}$ are in bijection with  the graphs $\mathfrak{G}(G,m)$
$$  \mathfrak{G}(G,m ) \leftrightarrow  \{    \text{   corner faces of  } \overline{\mathcal{M}}_{(G,m)}    \} .$$
The corner face $ \overline{\mathcal{M}}_{G,m }( G',m',E') $ corresponding to  $(G',m',E') \in \mathfrak{G}(G,m)$ comes with an identification of Kuranishi spaces: 
\begin{equation} \label{corner-faces-multi-interp0}
 \overline{\mathcal{M}}_{G, m }(G' ,m', E') \cong   \delta_{E'}  \overline{\mathcal{M}}_{G',m'}.
\end{equation}
\end{enumerate}

Note that in point $(3)$ it is used point $(2)$ to define the Kuranishi structure on $\delta_{E'}  \overline{\mathcal{M}}_{G',m'}$ as fiber product. 

If $E_k = E_{k+1}$ we require that 
$\overline{\mathcal{M}}_{G,m}  $ is the pull-back of $\overline{\mathcal{M}}_{G,\partial_k m}  $ by the map 
\begin{equation} \label{kuranishi-degenerate-marking}
    \overline{\mathcal{M}}_{G,m} \rightarrow \overline{\mathcal{M}}_{G,\partial_k m}  
\end{equation}
inducted by 
$\Delta^l \rightarrow \Delta^{l-1}, (t_k,t_{k+1}) \mapsto (t_k + t_{k+1})$.

We require  
\begin{equation} \label{kuranishi-cut}
    \overline{\mathcal{M}}_{G,m} = \overline{\mathcal{M}}_{cut_{E_0}G,m}.
\end{equation}

We require the following compatibility conditions for (\ref{corner-faces-multi-interp0}). Assume we have $G'' \prec G' \prec G$ with  $G''/E''=G'$,  $G'/E'=G$. In particular we have $G''/(E' \sqcup E'')= G$,  where $E'$ is identified with a subset of $E(G'')$.
There is an identification of Kuranishi spaces 
\begin{equation} \label{substrata-interp0}
\overline{\mathcal{M}}_{G,m} (G'',m'', E' \sqcup E'') \cong \delta_{E'}  \overline{\mathcal{M}}_{G',m'} \cap  \overline{\mathcal{M}}_{G',m'} (G'',m'', E'') \cong \delta_{E' \sqcup E''}  \overline{\mathcal{M}}_{G'',m''}.
\end{equation}
The spaces appearing on (\ref{substrata-interp0})  are closed subspace of $\overline{\mathcal{M}}_{G,m}$, $\overline{\mathcal{M}}_{G',m'}$ , $\overline{\mathcal{M}}_{G'',m''}$ respectively.


\begin{lemma} There exists a Kuranishi structure on the moduli space of multi-curves satisfying the above conditions.
\end{lemma}
\begin{proof}
The Lemma  is proved using a similar inductive argument of Lemma \ref{argument}.

Let $(G,m) \in \mathfrak{G}_l$, and assume that we have defined the Kuranishi structures for $(G'm')$ with $G' \prec G$ or $G=G'$ and $|m'| < l$.
 

If $ E^{ex}(G) \neq \emptyset $ ,  use forgetful compatibility to define  the Kuranishi structure on   $ \overline{\mathcal{M}}_{G,m}$ from the   Kuranishi structure of  
$ \overline{\mathcal{M}}_{forget_{e}(G,m)}$.


In the case there are not removable edges we apply the argument of Lemma \ref{argument}:  first use (\ref{corner-faces-multi-interp0}) to define a Kuranishi structure on a small neighborhood of the boundary of  $ \overline{\mathcal{M}}_{G,m}$ (observe that the inductive hypotheses  imply that the conditions are compatible on a neighborhood of the corner) and then extend the Kuranishi structure inside $ \overline{\mathcal{M}}_{G,m}$ using the argument around (\ref{obstruction-modification}).
\end{proof}

The following Lemma is an immediate consequence  of Lemma \ref{orientation-curves} and the  definition of $\mathfrak{o}_G$
\begin{lemma}
A spin structure on $L$ induces on  $ \overline{\mathcal{M}}_{G,m}$ an orientation with twisted coefficients $\mathfrak{o}_{H(G)}$.
\end{lemma}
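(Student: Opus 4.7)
The plan is to lift the classical orientation from moduli spaces with \emph{ordered} boundary marked points and then check that the residual ambiguity is exactly the one encoded by $\mathfrak{o}_G$. By the standard results of \cite{FO3} (see also Solomon), once one fixes an ordering of the boundary marked points of a single moduli space $\overline{\mathcal{M}}^{main}_{(g,h),\vec{m}}(\beta)$, a spin structure on $L$ together with the Maslov index zero hypothesis canonically orients it. In our setting, for each vertex $v \in V(G)$ the boundary marked points of the corresponding curve are precisely the half-edges $H_v = \pi^{-1}(v)$, so this applies as soon as we order $H_v$.

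First I would package the choices needed to orient the product factor by factor. Given an element of $\mathfrak{o}_e$ (an ordering of the two half-edges in $e$) for every edge $e$, together with an auxiliary ordering of $E(G)$, one obtains a total ordering of $H(G)$ and hence of each $H_v$ by restriction. Equip $\prod_{v \in V(G)} \overline{\mathcal{M}}^{main}_{(g_v,h_v),H_v}(\beta_v)$ with the product of the corresponding orientations. A change of the ordering of $E(G)$ by a permutation $\tau$ reshuffles the blocks of $H(G)$ and modifies the product orientation by $\text{sgn}(\tau)$, which is exactly the associated sign in $\mathfrak{o}_G = \bigotimes_{e \in E(G)} \mathfrak{o}_e$; so the orientation is well-defined as a section of $\mathfrak{o}_G$ on the product, independently of the ordering of $E(G)$.

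Next I would check compatibility with the action of $\text{Aut}(G)$. Any $g \in \text{Aut}(G)$ is a permutation of $H(G)$ commuting with $\sigma$, so it descends to a permutation of $E(G)$ and may flip the orientations of individual edges. The induced action on the product relabels boundary marked points of each factor and permutes the factors themselves; by the functoriality of the spin-orientation conventions of \cite{FO3}, the resulting sign on the product orientation matches, edge-by-edge, the sign picked up by $\mathfrak{o}_G$ under $g$. Consequently the $\mathfrak{o}_G$-twisted orientation descends to the quotient $\overline{\mathcal{M}}_G$, as desired.

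The main obstacle is the sign bookkeeping of the previous two steps: one must pin down how the spin-induced orientation of a single $\overline{\mathcal{M}}^{main}_{(g,h),\vec{m}}(\beta)$ transforms under a swap of two boundary marked points — possibly on the same or on different boundary components, and possibly breaking or preserving cyclic order — and then check that the cumulative sign over all factors agrees with the $\text{Aut}(G)$-action on $\mathfrak{o}_G$. This reduces to the sign analysis of \cite{FO3} for the boundary Cauchy--Riemann index, combined with a combinatorial identification of the resulting characters on $\text{Aut}(G)$ with those naturally carried by $\mathfrak{o}_G$.
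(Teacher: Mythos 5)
Your overall strategy is the same as the paper's: the spin structure orients the individual moduli spaces (the paper starts from the spaces \emph{without} boundary marked points, which are oriented of virtual dimension zero, and views each half-edge as contributing an extra one-dimensional factor; you start from the ordered-marked-point orientations of \cite{FO3}), and in both cases the content of the lemma is that the residual dependence on an ordering of $H(G)$ is exactly absorbed by $\mathfrak{o}_G = \bigotimes_{e}\mathfrak{o}_e$.

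There is, however, a concrete sign error in your second step. You claim that changing the auxiliary ordering of $E(G)$ by a permutation $\tau$ reshuffles the size-two blocks of $H(G)$ and "modifies the product orientation by $\mathrm{sgn}(\tau)$, which is exactly the associated sign in $\mathfrak{o}_G$." Neither half of this is right: permuting blocks of size two induces an \emph{even} permutation of $H(G)$ (a transposition of two blocks of sizes $m$ and $m'$ has sign $(-1)^{mm'}=+1$ for $m=m'=2$), so the orientation does not change at all; and $\mathfrak{o}_G$, being a tensor product of the two-element torsors $\mathfrak{o}_e$, carries no sign under a reordering of $E(G)$, so there is no "associated sign" for $\mathrm{sgn}(\tau)$ to match. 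If the orientation genuinely picked up $\mathrm{sgn}(\tau)$, the lemma as stated would be false — you would need an additional twist by the orientation torsor of the set $E(G)$. The correct statement, which is precisely the paper's one-line argument, is that an orientation of each edge determines an ordering of $H(G)$ \emph{up to even permutations} (flipping one edge orientation is a transposition, hence the twist by $\mathfrak{o}_e$; reordering the edges is even, hence no further twist). With that correction your argument, including the $\mathrm{Aut}(G)$-equivariance check via the cyclic invariance of the Kuranishi structures and the sign conventions of \cite{FO3}, goes through.
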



Now set
 \begin{equation}    \label{multi-curve-interp}
\overline{\mathcal{M}}_{G,m}^K  :=   \overline{\mathcal{M}}_{G,m} \times_{ X^{D(G)} }   K^{D(G)} .
\end{equation} 
B $\overline{\mathcal{M}}_{G,m}^K $ is endowed of a Kuranishi Structure induced from the Kuranishi Structure of $\overline{\mathcal{M}}_{G,m} $  as fiber product (here we use property $(2)$ above ) .  From the properties of $\overline{\mathcal{M}}_{G,m} $, the Kuranishi spaces $\overline{\mathcal{M}}_{G,m}^K $ is endowed with the following properties:
\begin{enumerate}
\item $ \overline{\mathcal{M}}_{G,m}^K$ is forgetful compatible;
\item the evaluation map $\text{ev} : \overline{\mathcal{M}}_{G,m}^K \rightarrow L^{H(G) \setminus H_l} $ is weakly submersive;
\item   
to  each element $(G',m', E') \in \mathfrak{G}(G,m)$ corresponds a corner face $ \overline{\mathcal{M}}_{G,m }^K( G',m',E') $ of $\overline{\mathcal{M}}_{G,m}^K $  which comes with  an identification of Kuranishi spaces: 
\begin{equation} \label{corner-faces-multi-interp}
 \overline{\mathcal{M}}_{G, m }^K(G' ,m',E') =   \delta_{E'}  \overline{\mathcal{M}}_{G',m'}^K.
\end{equation}
\end{enumerate}
From property $(3)$ there is a map
$$  \mathfrak{G}(G,m ) \rightarrow  \{    \text{   corner faces of  }
 \overline{\mathcal{M}}_{(G, m \})} ^K   \} .$$
 which is injective but it is not surjective since the boundary faces of $\overline{\mathcal{M}}_{(G,m)}^K$ corresponding to the boundary of $K$ are missing  in  $\mathfrak{G}(G,m )$ .



The compatibility condition (\ref{substrata-interp0}) is extended straightforwardly: 
\begin{equation} \label{substrata-interp}
\overline{\mathcal{M}}_{G,m}^K (G'',m'', E' \sqcup E'') \cong \delta_{E'}  \overline{\mathcal{M}}_{G',m'}^K \cap  \overline{\mathcal{M}}_{G',m'}^K (G'',m'', E'') \cong \delta_{E' \sqcup E''}  \overline{\mathcal{M}}_{G'',m''}^K.
\end{equation}


\subsubsection{Parametric version} \label{parametric-section}

The moduli spaces we considered so far are associated to a fixed almost complex structure $J$.
For a fixed $J$, the Kuranishi structures we considered are not canonical, but they depend on varies choices we made during their construction. 
We  now consider the parametric version of the moduli spaces above, which define isotopies between  two different choice of  Kuranishi structures constructed as above. 

Let  $J_0$, $J_1$ be two almost complex structures compatible to the symplectic structure $\omega$.
Fix a Kuranishi structure on $\overline{\mathcal{M}}^0_{G,m} (J_0)$ and $\overline{\mathcal{M}}^1_{G,m} (J_1)$ constructed as in Subsection \ref{Moduli-interp-section}.
  
Let $J_{para}= \{ J_s \}_{0 \leq  s \leq 1}$ be one parameter family of compatible almost complex structures such that $J_s=J_0$ for $s \in [0, \epsilon]$ and $J_s=J_1$ for $s \in [1- \epsilon, 1]$.   
Let 
$$\hat{\overline{\mathcal{M}}}_{(G,m)}(J_{para}) := \bigsqcup_s \{ s \} \times \overline{\mathcal{M}}_{(G,m)}(J_s)$$
be  the parametric version of the moduli of multi-curves.




It is straightforward to extend the notion of forget-compatibility to $ \hat{\overline{\mathcal{M}}}_{G,m}$.

We can endow the moduli spaces  $ \hat{\overline{\mathcal{M}}}_{G,m}$ with a Kuranishi structure such that
\begin{itemize}
\item $ \overline{\mathcal{M}}_{G,m}$ is forgetful compatible  ;
\item the evaluation map $\text{ev} : \hat{\overline{\mathcal{M}}}_{G,m} \rightarrow L^{H(G) \setminus H_l} \times X^{D(G)} \times [0,1]$ is weakly submersive;
\item  to each element $(G',E',m') \in \mathfrak{G}(G,m)$ corresponds corner faces  $ \hat{\overline{\mathcal{M}}}_{G,m }( G',E',m',para)$  , $\hat{\overline{\mathcal{M}}}_{G,m }( G',E',m',0)$  , $\hat{\overline{\mathcal{M}}}_{G,m }( G',E',m',1) $  coming with  identification of Kuranishi spaces: 
\begin{align} \label{corner-faces-multi-interp-para}
\begin{split}
\hat{\overline{\mathcal{M}}}_{G,m} ((G', E' ,m', para)  \cong \delta_{E'} \hat{\overline{\mathcal{M}}}_{G',m'} ,  \\
\hat{\overline{\mathcal{M}}}_{G,m} ( G' ,E' , m' ,0)  \cong \delta_{E'} \overline{\mathcal{M}}_{G',m'}^0 , \\
\hat{\overline{\mathcal{M}}}_{G,m} (  G', E',m' ,1)  \cong \delta_{E'}  \overline{\mathcal{M}}_{G',m'}^1 .
\end{split}
\end{align}
\end{itemize}
The corner faces of   $\hat{\overline{\mathcal{M}}}_{G,m}$
are in bijection with  the set $\mathcal{G}(G,m)  \times \{0,para,1 \} $
$$  \mathfrak{G}(G,m ) \times \{0,para,1 \} \Longleftrightarrow  \{    \text{   corner faces of  } \hat{\overline{\mathcal{M}}}_{(G,m)}    \} .$$

We require that the identification (\ref{corner-faces-multi-interp-para}) satisfy compatibility conditions analogous to (\ref{substrata-interp}).

\subsubsection{Dependence on the four chain}

So far, in the definition of the moduli spaces   $\overline{\mathcal{M}}_{G,m}^K $ we have  fixed a  four chain $K$. We now consider the question of how our construction  depend    on the choice of the representative $K$ of $[K] \in H_4(X,L) $.

Let $K_0$ and $K_1$ be two four chains with $[K_0]=[K_1] \in H_4(X,L)$. Let $\tilde{K} \in C_5(X \times [0,1])$ such that
$$ \partial \tilde{K} = K_0 \times {0} - K_1 \times {1} + L \times [0,1]  .$$


We would like to use $\tilde{K}$ to define an isotopy of moduli space between the moduli spaces $\overline{\mathcal{M}}_{G,m}^{K_0} $ and $\overline{\mathcal{M}}_{G,m}^{K_1} $ as fiber product  $ \hat{\overline{\mathcal{M}}}_{G,m} \times_{X^{D(G)} \times [0,1]}  (\tilde{K}^{n} \cap X^n \times Diag_{[0,1]}) $. However we need to take in consideration that  $\tilde{K}^n$ is a  $4n+1$-chain on $(X \times [0,1])^n$ which in general is not transversal to $X^n \times Diag_{[0,1]}$ ( here we denote by $Diag_{[0,1]} \cong [0,1]$  the small diagonal of $[0,1]^n$). To get around this problem instead to  use  $\tilde{K}^n$ we shall use a chain  $\tilde{K}^{[n]} \in C_{4n +1}(X^n \times [0,1])$ with the following properties:
\begin{itemize}
\item  
$$ \tilde{K}^{[n]} =  \tilde{K}^{n,+} \cap  (X^n \times Diag_{[0,1]}) .$$
for some  chain $\tilde{K}^{n, +}  \in C_{5n}(X^n \times [0,1])$ close in the $C^0$-topology to $\tilde{K}^n$ and  transversal to  $X^n \times Diag_{[0,1]}$.

\item it is invariant by the action  of the group of permutations $S_n$ of the factors of $X^n$;
\item $   \partial \tilde{K}^{[n]} = ( K_0^n \times  \{0 \} - K_1^n \times  \{ 1 \} +   \tilde{K}^{n-1} \times L )/{(S_n)}. $
\end{itemize}
The existence  $\tilde{K}^{[n]}$ can be proved easily by induction on $n$.

Set
$$   \hat{\overline{\mathcal{M}}}_{G,m}^{\tilde{K}} =  \hat{\overline{\mathcal{M}}}_{G,m} \times_{X^{D(G)} \times [0,1]}  \tilde{K}^{D(G)}.$$
This space has analogous properties to $\hat{\overline{\mathcal{M}}}_{G,m}^K$ stated above.

The isotopy is not uniquely determined up to isotopy of isotopies.  The set of equivalence classes of isotopies is a  torsor on $H_5(X, \Z)$:
$$ equivalence \; classes \;  of \; isotopies  \Longleftrightarrow  \{  S \in C_5(X) |  \partial S = K_1 -K_0  \}/ \partial C_6(X) .$$







\subsection{Perturbation of the moduli of multi-curves}

We now want to define constrains on the perturbation of the moduli space of multi-curves such that the associated virtual fundamental classes  yield to a $MC$-cycle.

We start defining the notion of  forgetful compatible perturbation. 
This notion will  depends on the choice of a perturbation of  the moduli space of area zero annulus. 
In the following we use the  notation of subsection \ref{forget-multi-moduli}.

Fix a transversal perturbation 
\begin{equation} \label{perturbation-ann0}
(\mathfrak{s}_{G})_{G \in Ann0}
\end{equation}
 of $(\overline{\mathcal{M}}_{G}^K)_{G \in Ann0}$ compatible with the identification of Kuranishi spaces (\ref{corner-faces-multi-interp}). 
Note that the boundary of  (\ref{annelus}) are identified with $\text{ev}^{-1}(Diag_L)$ of (\ref{disk-edge}) and with the boundary face of (\ref{annelus-deg}).

Also set the perturbation of  the moduli space of area zero disks with $3$ marked points 
$\mathfrak{s}_{Disk_0}$
as its Kuranishi map, i.e. , we do not perturb its moduli space.

We say that the perturbation $\mathfrak{s}_{G,m}$ of  the Kuranishi space $\overline{\mathcal{M}}_{G,m}$  is forgetful compatible
if it is related to the perturbation 
 $\mathfrak{s}_{G',m'}$ of  $\overline{\mathcal{M}}_{G',m'}$ 
in the following way:
\begin{itemize}
    \item 
If $G$ is stable after removing $e$ we require that $\mathfrak{s}_{G,m}$ is the pull-back of  $\mathfrak{s}_{G',m'}$.
\item
In the case $G$ is unstable after removing $e$ and $e$ is removable we require that 
\begin{equation}  \label{forgetful-compatibility-multi-curves-pert}
\mathfrak{s}_{G,m} = \mathfrak{s}_{G',m'} \times \mathfrak{s}_{G_e}
\end{equation}
according the identification of Kuranishi spaces (\ref{forgetful-compatibility-multi-curves}).
\end{itemize}

If $(G,m)$ has not removable edge, we require that $\mathfrak{s}_{G,m}$ is compatible with (\ref{moduli-factorization}).

Observe that if $ \mathfrak{s}_{G',m'} $ is transversal to the zero section, the same holds for  $\mathfrak{s}_{G,m}$.

Let $ \mathfrak{G}(\beta,  \kappa   )$ be the set of $G \in  \mathfrak{G}(\beta)$ such that $|E^{ex}(G)| - \chi(G) = \kappa$.  Set $ \mathfrak{G}(\beta,  \leq \kappa   ) = \sqcup_{\kappa' \leq \kappa} \mathfrak{G}(\beta,  \kappa'   )$.
\begin{proposition} \label{transversality}
Fix $\beta \in H_2(X,L)$ and $\kappa \in \Z_{\geq 0}$. Also fix  perturbations of area zero annulus (\ref{perturbation-ann0}) .
There exists e collection of perturbations 
$( \mathfrak{s}_{(G,m)} )_{G \in \mathfrak{G}(\beta, \leq \kappa   ) }$ 
of the collection of Kuranishi spaces
$(\overline{\mathcal{M}}_{G,m}^K )_{G \in \mathfrak{G}(\beta, \leq \kappa   ) }$ such that 
\begin{enumerate}
\item they are transversal to the zero section and small in order for constructions of virtual class to work;
\item they are compatible with the identification of Kuranishi spaces (\ref{corner-faces-multi-interp});
\item 
  forgetful compatibility holds;
  \item they are compatible with (\ref{kuranishi-cut}) and (\ref{kuranishi-degenerate-marking}).
\end{enumerate}
\end{proposition}

\begin{proof}



Let $G \in  \mathfrak{G}(\beta,   \kappa   )$.
Assume that we have  defined $\mathfrak{s}_{(G',m')} $ for $G' \in  \mathfrak{G}(\beta, < \kappa   )$ or $G' \prec G$ or $G'=G$ and $|m'| < l$
such that $(1),(2),(3)$ hold. Assume  moreover 
\begin{itemize}
    \item (extra-condition) $\mathfrak{s}_{(G',m')} $  is transversal to the zero section when restricted to the closed spaces $\delta_{E'} \overline{\mathcal{M}}_{(G',m')}^K$ for each $E' \subset E^{in}(G') \setminus E_{l'}'$. 
\end{itemize} 

If $ E^{ex}(G) \neq \emptyset$ removable, use condition $(3)$ to define $\mathfrak{s}_{(G,m)}$ in terms of $\mathfrak{s}_{forget_e(G),m'}$.
It is immediate to check that  $\mathfrak{s}_{(G,m)}$  satisfies conditions $(1)$, $(2)$ and the extra condition above.

Assume now that there are not  $ E^{ex}(G) = \emptyset$.
Observe that condition $(2)$ 
defines  $\mathfrak{s}_{(G,m)}$ on the closed subset of 
$$ \bigsqcup_{(G',m') \prec (G,m)} \overline{\mathcal{M}}_{(G,m)}^K (G',m') \subset \partial \overline{\mathcal{M}}_{(G,m)}^K.$$ 
The fact that the $\mathfrak{s}_{(G',m')}$  are compatible on the overlap substrata follows from the inductive hypothesis and the compatibility condition (\ref{substrata-interp}) :
$$  \mathfrak{s}_{(G',m')}|_{ \overline{\mathcal{M}}_{(G',m')}^K (G'',m'',E' \sqcup E'') \cap \delta_{E'}  \mathcal{M}_{G',m'}^K} =  \mathfrak{s}_{G'',m''} |_{ \delta_{E' \sqcup E''}  \mathcal{M}_{G'',m''}^K } .$$
The extra condition above assures that $\mathfrak{s}_{(G,m)}$ is transversal on this subset.


Therefore we could try to use Lemma \ref{perturbation-lemma-FO}
in order to define $\mathfrak{s}$ on $\overline{\mathcal{M}}_{G,m}^K$ such that compatibility 
condition $(2)$ holds. However we need to require that the perturbation on $\overline{\mathcal{M}}_{G,m}^K$ has to be small in order for constructions of virtual class also work. This is probably incompatible on (\ref{corner-faces-multi-interp}) with the perturbations we have defined in the preview steps.  To solve this problem we need to choose at every step the perturbations $\mathfrak{s}_{G,m}$  sufficiently close to $s_{G,m}$ in $C^0$-topology so that not only it makes the construction of virtual class for $\overline{\mathcal{M}}_{G,m}^K$ work, but also 
the prescribed values for $\mathfrak{s}$ at later inductive steps in the proof should be sufficiently close to $s$ that the later constructions of virtual class also work. (An analogous issue arise  in $7.2.56$ of \cite{FO3}.) Moreover we can assume generically that the extra condition holds also.




\end{proof}

\subsubsection{Main theorem}

For each $(G,m) \in \mathfrak{G}_{l}(\beta, \chi)$, the evaluation map on the punctures defines strongly continuous map 
\begin{equation} \label{higher-ev}
\text{ev}_{G,m} : \overline{\mathcal{M}}_{G,m}^K \rightarrow L^{H(G)}. 
\end{equation}

Using the perturbation of the Kuranishi spaces defined in Proposition \ref{transversality} set 
\begin{equation} \label{multi-curve-cycle-not-ab}
 Z_{(G,m)}^{not-ab} =  (\text{ev}_{G,{m}})_* (\mathfrak{s}_{G,{m}}^{-1}(0)).
\end{equation}

Recall that in the preview section in order to define $MCH$ we have used a different kind of set of decorated graphs $\mathcal{G}$.  We can go from $ \mathfrak{G}$ to  $\mathcal{G}$ using what we call abelianization map:
$$ab: \mathfrak{G} \rightarrow \mathcal{G}$$ 
$$ (Comp, (V_c, D_c, \beta_c ,g_c )_c, (H_v)_v, E) \mapsto (V^{ab} , (H_{v'}^{ab}, \chi_{v'}^{ab}, \beta_{v'}^{ab})_{v'}, E^{ab} )$$
where
$$ V^{ab} = Comp, H_{c}^{ab} = \sqcup_{v \in V_c} H_v, \beta_{c}^{ab}= \beta_c , \chi_{c}^{ab} = 2 - 2 g_c - |D_c| - |V_c|, E^{ab} = E.$$
Note that in the definition of $H_{v'}^{ab}$ it is forgot the cyclic order of $H_v$.


Set
\begin{equation} \label{multi-curve-cycle}
Z_{(G^{ab},m)} = \sum_{ab(G,m)=(G^{ab},m)}  Z_{(G,m)}^{not-ab}
\end{equation} 
for each $(G^{ab},m) \in \mathcal{G}(\beta, \chi)$.

From point $(2)$ of Proposition \ref{transversality} it follows directly that $\hat{\partial} Z =0$ .
The fact that $Z$ is forgetful compatible  follows essentially from the assumption $(3)$  of Proposition \ref{transversality} . 
However this claim does not hold in a strict sense since the  unstable components after removing the external edges considered in section (\ref{forget-multi-moduli}) do not lead to a trivial chain.
Hence there can the graphs $(G,m)$ considered in section \ref{forget-MC} with $V(G'') \setminus V(G') \neq V^{ann}$ but $Z_{G,m} \neq 0$. 
 However these chains can be contracted to zero in an essentially unique way in (\ref{multi-curve-cycle}).  We can use this contraction to define an isotopy between (\ref{multi-curve-cycle}) and a $MC$-cycle for which forgetful compatibility holds in the sense of section \ref{forget-MC} . We shall refer to (\ref{multi-curve-cycle}) as the $MC$-cycle where this isotopy is understood.   



The $MC$-cycle (\ref{multi-curve-cycle}) depends on the varies choice we made in the construction we made so far, such as  the  compatible almost complex structure $J$, the Kuranishi structure and its perturbations.  
However we have the following main result:
\begin{theorem}  \label{well-defined}
Equation  (\ref{multi-curve-cycle})  defines a multi-curve-cycle.  Different choices in the construction lead to isotopic $MC$-cycles, with isotopy determined up to isotopy. 
\end{theorem}
\begin{proof}





Let $Z_1$ and $Z_2$ be two cycles corresponding to two different choices of a perturbation, etc. 
We construct the isotopy between  $Z_1$ and $Z_2$  using the parametric space of subsection \ref{parametric-section}. 

It is straightforward to extend forgetful compatibility to perturbations $ \mathfrak{s}^{para}_{(G,m)} $ of $\hat{\overline{\mathcal{M}}}_{G,m}$.

Let $\mathfrak{s}^0_{(G,m)}$ and $\mathfrak{s}^1_{(G,m)}$ be  perturbations of $\overline{\mathcal{M}}_{(G,m)}(J_0)$ and $\overline{\mathcal{M}}_{(G,m)}(J_1)$ satisfying the conditions of Proposition \ref{transversality}.

Using an  inductive argument analogous to the one of Proposition \ref{transversality}, we  can prove the existence of perturbations $ \mathfrak{s}^{para}_{(G,m)} $ of $\hat{\overline{\mathcal{M}}}_{G,m}$ such that 
\begin{enumerate}
\item are transversal to the zero section and small in order for constructions of virtual class to work;
\item are compatible with the identification of Kuranishi spaces (\ref{corner-faces-multi-interp-para});
\item 
 the forgetful compatibility   holds.
 \item
  $\mathfrak{s}_{(G',m')}^{para} $  is transversal to the zero section when restricted to the closed space $\delta_{E'} \overline{\mathcal{M}}_{(G',m')}^{para}$ for each $E' \subset E^{in}(G')$.  
 \end{enumerate}
 Observe that condition $(1)$ set extra conditions on $\mathfrak{s}_{(G,m)}(J_0)$ and $\mathfrak{s}_{(G,m)}(J_1)$. 
 



Now, for each $(G,m) \in \mathfrak{G}_{l}(\beta, \chi)$, the evaluation map on the punctures yields to a strongly continuous map 
\begin{equation} \label{higher-ev-para}
\text{ev}_{G,m}^{para} : \overline{\mathcal{M}}_{G,m}^{para} \rightarrow L^{H(G)} \times [0,1]. 
\end{equation}
which is the parametric version of the evaluation map (\ref{higher-ev}).

Set
$$ \tilde{Z}_{G,m}(J_{para}) =  (\text{ev}^{para}_{G,{m}})_* ((\mathfrak{s}_{G,{m}}^{para})^{-1}(0)) .$$
From properties $1,2,3$ above it follows that $(\tilde{Z}_{G,m}(J_{para}))_{G,m}$ defines  the required isotopy.   
\end{proof}

\end{document}